\let\zeroslash\emptyset
\def\paragraph{\subsection}
\def\@sect#1#2#3#4#5#6[#7]#8{%
  \edef\@toclevel{\ifnum#2=\@m 0\else\number#2\fi}%
  \ifnum #2>\c@secnumdepth \let\@secnumber\@empty
  \else \@xp\let\@xp\@secnumber\csname the#1\endcsname\fi
  \@tempskipa #5\relax
  \ifnum #2>\c@secnumdepth
    \let\@svsec\@empty
  \else
    \refstepcounter{#1}%
    \edef\@secnumpunct{%
      \ifdim\@tempskipa>\z@ 
        \@ifnotempty{#8}{.\@nx\enspace}%
      \else
        \@ifempty{#8}{.}{.\@nx\enspace}%
      \fi
    }%
      \ifnum #2=\tw@ \def\@secnumfont{\bfseries}\fi{}%
    \protected@edef\@svsec{%
      \ifnum#2<\@m
        \@ifundefined{#1name}{}{%
          \ignorespaces\csname #1name\endcsname\space
        }%
      \fi
      \@seccntformat{#1}%
    }%
  \fi
  \ifdim \@tempskipa>\z@ 
    \begingroup #6\relax
    \@hangfrom{\hskip #3\relax\@svsec}{\interlinepenalty\@M #8\par}%
    \endgroup
    \ifnum#2>\@m \else \@tocwrite{#1}{#8}\fi
  \else
  \def\@svsechd{#6\hskip #3\@svsec
    \@ifnotempty{#8}{\ignorespaces#8\unskip
       \@addpunct.}%
    \ifnum#2>\@m \else \@tocwrite{#1}{#8}\fi
  }%
  \fi
  \global\@nobreaktrue
  \@xsect{#5}}
\def\pxspace{\@ifnextchar.{\@}{.\@\xspace}}
\def\citeHerr%
\@ifnextchar*{\citeHerrs}{\cite{Herr1973a}\allowbreak\cite{Herr1973a-en}}}
\def\citeHerrs*#1#2{\cite{Herr1973a}*{#1}\allowbreak\cite{Herr1973a-en}*{#2}}
\newcommand{\hatzero}{\hat{0}}
\newcommand{\hatone}{\hat{1}}
\newcommand{\mvert}{\,|\,}      
\newtheorem{theorem}{Theorem}[section]
\newtheorem{definition}[theorem]{Definition}
\newtheorem{lemma}[theorem]{Lemma}
\newcommand{\disconnect}{\leavevmode\par}
\newcommand{\forwardref}[1]{\ref{#1}}
\def\pxspace{\@ifnextchar.{\@}{.\@\xspace}}
\newcommand{\manuallabel}[2]{\edef\@currentlabel{#2}\label{#1}}
\newcommand{\itemlabel}[2]{\item[#1]\manuallabel{#2}{#1}}
\newcommand{\interject}[1]%
{\noalign{\begin{quote}#1\end{quote}}}
\newcommand{\relgamma}{\mathrel{\gamma}}
\newcommand{\relrho}{\mathrel{\rho}}
\newcommand{\relxi}{\mathrel{\xi}}
\newcommand{\ccolon}{\mathrel{::}}
\newcommand{\suppl}{+}
\newcommand{\supmi}{--}	
\newcommand{\lffc}{l.f.f.c\pxspace}
\newcommand{\fmc}{f.m.c\pxspace}
\renewcommand{\P}[2]{{\phi_{#1}^{#2}}} 		
\newcommand{\PI}[2]{{{\phi_{#1}^{#2}}^{-1}}}       
\newcommand{\F}[2]{{F_{#1}^{#2}}}                 
\newcommand{\I}[2]{{I_{#1}^{#2}}}                 
\newcommand{\W}[2]{{\omega_{#1}^{#2}}}            
\DeclareMathOperator{\id}{id}
\DeclareMathOperator{\im}{im}
\DeclareMathOperator{\dom}{dom}
\begin{document}

\ifdefined\draftlabels
  \global\let\labelorig\label
  \global\def\label#1{[label:#1]\ \labelorig{#1}}
\fi

\title[Modular lattices --- gluing axioms]%
{On the structure of modular lattices --- Axioms for gluing}
\author{Dale R. Worley}
\email{worley@alum.mit.edu}
\date{Apr 7, 2025} 

\begin{abstract}
This paper explores alternative statements of the axioms for
lattice gluing, focusing on
lattices that are modular, locally finite, and
have finite covers, but may have infinite height.
We give a set of ``maximal'' axioms that
maximize what can be immediately adduced about the
structure of a valid gluing.
We also give a set of ``minimal'' axioms that minimize
what needs to be adduced to prove that a system of blocks is a valid
gluing. This system appears to be novel in the literature.
A distinctive feature of the minimal axioms is that they
involve only relationships between elements of the skeleton
which are within an interval $[x \wedge y, x \vee y]$ where either $x$
and $y$ cover $x \wedge y$ or they are covered by $x \vee y$.  That
is, they have a decidedly \emph{local} scope, despite that the
resulting sum lattice, being modular, has \emph{global} structure,
such as the diamond isomorphism theorem.
\end{abstract}

\maketitle

\tableofcontents

\section{Introduction}

This paper is to explore alternative statements of the axioms for
lattice gluing.  Lattice gluing is described in
Herrmann \citeHerr, Day \cite{DayHerr1988a}, and
\cite{Wor2025a}.  In this paper, we focus on the variant in
\cite{Wor2025a}, viz. lattices that are modular, locally finite, and
have finite covers, but may have infinite height.
We treat monotony (that blocks are incomparable
as subsets of the lattice) as an independent property in order to
support gluings that are ``polytone''.  (Polytone gluings can arise from
considering sublattices of dissected lattices, among other situations.)

We also treat the existence of $\hatzero$ in the skeleton as an
independent property.
However, for all these sets of axioms, $\hatzero$ exists in the
skeleton iff $\hatzero$ exists in the sum
lattice,\cite{Wor2025a}*{Th.~4.70}
so we will not discuss this property further.

We have two goals.  One is to determine a set of ``maximal'' axioms,
in order to maximize what can be immediately adduced about the
structure of a valid gluing.  These axioms, or statements close to
them, are in the prior literature.

The second goal is to determine a set of ``minimal'' axioms, to minimize
what needs to be adduced to prove that a system of blocks is a valid
gluing. Some of these axioms appear to be novel in the literature.
A distinctive feature of the minimal axioms is that they
involve only relationships between elements of the skeleton
which are within an interval $[x \wedge y, x \vee y]$ where either $x$
and $y$ cover $x \wedge y$ or they are covered by $x \vee y$.  That
is, they have a decidedly \emph{local} scope, despite that the
resulting sum lattice, being modular, has \emph{global} structure,
such as the diamond isomorphism theorem.

The minimal axioms will reduce the work needed to classify valid
gluings.

\section{Preliminaries}

\paragraph{Lattice properties}

\begin{definition} \label{def:lffc}
We define that
a lattice is \emph{locally finite with finite covers} ---
abbreviated \emph{\lffc} --- if it is both locally finite and has
finite covers (both upper and lower).
\end{definition}

\begin{definition} \label{def:fmc}
We abbreviate that a lattice is finite, modular, and
complemented by saying it is \emph{\fmc.}
\end{definition}

\paragraph{Tolerances}
We use Bandelt \cite{Band1981a} as a reference for tolerances on
lattices.

\begin{definition} \cite{Band1981a}*{sec.~1}
We define that
a reflexive and symmetric binary relation $\relxi$ on a lattice $L$ is
a \emph{tolerance} if $\xi$ is compatible with the
meet and join of $L$, that is,
\begin{enumerate}
\item $a \relxi b$ and $c \relxi d$ imply $a \vee c \relxi b \vee d$, and
\item $a \relxi b$ and $c \relxi d$ imply $a \wedge c \relxi b \wedge d$.
\end{enumerate}
\end{definition}

\begin{lemma} \label{lem:tol} \cite{Band1981a}*{Lem.~1.1}
If $\relxi$ is a tolerance on a lattice $L$, then:
\begin{enumerate}
\item \label{lem:tol:i1} $x \relxi z$ and $x \leq y \leq z$ imply
$x \relxi y$ and $y \relxi z$.
\item \label{lem:tol:i2} $x \relxi y$ iff $x \vee y \relxi x \wedge y$,
\item \label{lem:tol:i3} $t \relxi x$, $t \relxi y$, and $t \leq x \wedge y$
imply $t \relxi x \vee y$.
\item \label{lem:tol:i4} $t \relxi x$, $t \relxi y$, and $t \geq x \vee y$
imply $t \relxi x \wedge y$.
\item \label{lem:tol:i5} $x \relxi x \vee y$ and $y \relxi x \vee y$
imply $x \vee y \relxi x \wedge y$, $x \relxi y$, and
$x \wedge y \relxi x, y$, that is, $\relxi$ holds among every pair of
$x$, $y$, $x \vee y$, and $x \wedge y$.
\item \label{lem:tol:i6} $x \relxi x \wedge y$ and $y \relxi x \wedge y$
imply $x \wedge y \relxi x \vee y$, $x \relxi y$, and
$x \vee y \relxi x, y$, that is, $\relxi$ holds among every pair of
$x$, $y$, $x \wedge y$, and $x \vee y$.
\end{enumerate}
\end{lemma}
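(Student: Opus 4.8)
The plan is to derive all six statements directly from the two compatibility conditions, using reflexivity in the form $a \relxi a$ as the main lever: combining a known instance $u \relxi v$ with $a \relxi a$ under the join (resp. meet) condition ``transports'' the relation to $u \vee a \relxi v \vee a$ (resp. $u \wedge a \relxi v \wedge a$), and by choosing $a$ comparable to $u$ or $v$ these joins and meets collapse to the elements we want. Note that modularity is never used.

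For part~\ref{lem:tol:i1}: apply the meet condition to $x \relxi z$ and $y \relxi y$ to get $x \wedge y \relxi z \wedge y$, which is $x \relxi y$ because $x \le y \le z$; apply the join condition to the same pair to get $x \vee y \relxi z \vee y$, which is $y \relxi z$. For parts~\ref{lem:tol:i3} and~\ref{lem:tol:i4}: the join condition applied to $t \relxi x$ and $t \relxi y$ gives $t \vee t \relxi x \vee y$, i.e.\ $t \relxi x \vee y$, and dually the meet condition gives $t \relxi x \wedge y$; in fact the comparability hypotheses $t \le x \wedge y$ and $t \ge x \vee y$ are not needed for this.

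For part~\ref{lem:tol:i2}, forward direction: combining $x \relxi y$ with $x \relxi x$ under the join condition gives $x \relxi x \vee y$, and combining $y \relxi x$ with $y \relxi y$ likewise gives $y \relxi x \vee y$; then the meet condition applied to $x \vee y \relxi x$ and $x \vee y \relxi y$ yields $x \vee y \relxi x \wedge y$. Conversely, from $x \vee y \relxi x \wedge y$ the meet condition against $x \relxi x$ gives $x \relxi x \wedge y$ and against $y \relxi y$ gives $y \relxi x \wedge y$; then the join condition applied to $x \relxi x \wedge y$ and $x \wedge y \relxi y$ gives $x \relxi y$. Parts~\ref{lem:tol:i5} and~\ref{lem:tol:i6} then follow formally: from $x \relxi x \vee y$ and $y \relxi x \vee y$ the meet condition gives $x \wedge y \relxi x \vee y$, whence part~\ref{lem:tol:i1} along $x \wedge y \le x \le x \vee y$ and $x \wedge y \le y \le x \vee y$ supplies $x \wedge y \relxi x$ and $x \wedge y \relxi y$, and part~\ref{lem:tol:i2} supplies $x \relxi y$; together with the hypotheses this is every pair among $x$, $y$, $x \vee y$, $x \wedge y$, and~\ref{lem:tol:i6} is the order-dual argument.

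There is no real obstacle here: the only place demanding a little care is part~\ref{lem:tol:i2}, where one must pick the right reflexive partner and the right order of applications so that the intervening joins and meets simplify to the stated elements; everything else is a one-line application of a compatibility condition.
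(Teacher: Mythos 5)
Your proof is correct and follows essentially the same strategy as the paper's (combining a known instance with a reflexive pair under the compatibility conditions); the only difference is that where the paper simply cites Bandelt for parts~(\ref{lem:tol:i2})--(\ref{lem:tol:i4}), you supply the short direct arguments, including the valid observation that the comparability hypotheses in (\ref{lem:tol:i3}) and (\ref{lem:tol:i4}) are not actually needed.
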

\begin{proof} \disconnect
Regarding (\ref{lem:tol:i1}):
Since $x \relxi z$ and $y \relxi y$, $x \vee y \relxi z \vee y$, which
is equivalent to $y \relxi z$.
Similarly, $x \wedge y \relxi z \wedge y$, which
is equivalent to $x \relxi y$.

Regarding (\ref{lem:tol:i2}), (\ref{lem:tol:i3}), and (\ref{lem:tol:i4}):
see Bandelt \cite{Band1981a}*{Lem.~1.1(1--3)~Proof}.

Regarding (\ref{lem:tol:i5}):
By (\ref{lem:tol:i4}), $x \vee y \relxi x \wedge y$.
By (\ref{lem:tol:i2}), $x \relxi y$.
By (\ref{lem:tol:i1}), $x \wedge y \relxi x, y$.

Regarding (\ref{lem:tol:i6}):
Proved dually to (\ref{lem:tol:i5}).
\end{proof}

\begin{definition} \label{def:leq-gamma}
If $\relxi$ is a tolerance on a lattice $L$, for $x, y \in L$, we define
$x \leq_\xi y$ iff $x \leq y$ and $x \relxi y$.
We define $x \geq_\xi y$ iff $x \geq y$ and $x \relxi y$.
\end{definition}

\paragraph{Ordered tolerances}
We now show that a tolerance on a lattice is characterized by its values on
pairs of comparable elements.%
\footnote{Pairs of comparable elements are often called
\emph{quotients}.}%
\footnote{Curiously, although it is well-known
(\cite{Birk1967a}*{\S II.4}) that
congruences on lattice elements are determined by the congruence on
quotients, that tolerances are determined by the tolerance on
quotients is not mentioned in \cite{Band1981a}.
Although that paper does mention ``For instance, every compatible
relation containing
the partial order $\leq$ corresponds to a tolerance relation,
and \emph{vice versa}''
and our lem.~\forwardref{lem:ord-tol} is ``every compatible
relation \emph{contained in}
the partial order $\leq$ corresponds to a tolerance relation
and vice versa.''}

\begin{definition} \label{def:ord-tol}
We call a reflexive relation $\relxi$ ($x \relxi y$) on pairs of comparable
elements ($x \leq y$) of a lattice
an \emph{ordered tolerance} iff it is compatible with the meet and
join of the lattice, that is,
if $x \relxi x^\prime$ and $y \relxi y^\prime$
(and thus $x \leq x^\prime$ and $y \leq y^\prime$),
then $x \wedge y \relxi x^\prime \wedge y^\prime$
and $x \vee y \relxi x^\prime \vee y^\prime$.
\end{definition}

\begin{lemma} \label{lem:ord-tol-convex}
An ordered tolerance $\relxi$, like a tolerance, is
``convex'', in that if $x \leq y \leq z$ and $x \relxi z$,
then $x \relxi y$ and $y \relxi z$.
\end{lemma}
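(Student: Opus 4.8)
The statement is the analogue for ordered tolerances of Lemma~\ref{lem:tol}(\ref{lem:tol:i1}), and the proof of that part carries over essentially verbatim; the only point requiring attention is that an ordered tolerance is only defined on comparable pairs, so I must check that every instance of $\relxi$ I write down is between comparable elements. The plan is therefore to feed the hypothesis $x \relxi z$ together with a reflexivity instance into the meet- and join-compatibility of Definition~\ref{def:ord-tol}.

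\textbf{Step 1 (obtain $y \relxi z$).} Starting from $x \relxi z$ (legitimate since $x \leq z$) and $y \relxi y$ (legitimate by reflexivity), join-compatibility of $\relxi$ gives $x \vee y \relxi z \vee y$. Since $x \leq y$ we have $x \vee y = y$, and since $y \leq z$ we have $z \vee y = z$; hence $y \relxi z$, and this is an admissible instance because $y \leq z$.

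\textbf{Step 2 (obtain $x \relxi y$).} Dually, from $x \relxi z$ and $y \relxi y$, meet-compatibility gives $x \wedge y \relxi z \wedge y$. Now $x \leq y$ forces $x \wedge y = x$ and $y \leq z$ forces $z \wedge y = y$, so $x \relxi y$, again an admissible instance since $x \leq y$. Together Steps~1 and~2 give the claim.

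\textbf{Main obstacle.} There is no real analytic difficulty here; the one thing that must not be glossed over is well-definedness — namely that $x \vee y$, $z \vee y$, $x \wedge y$, $z \wedge y$ collapse to the comparable pairs $(y,z)$ and $(x,y)$ so that the compatibility axiom is being applied and read off only on legal (comparable) arguments. Once that bookkeeping is made explicit, the proof is complete. (I would also remark that, conversely, this convexity is exactly what will later let an ordered tolerance be extended to a genuine tolerance on all pairs via $x \relxi y :\Leftrightarrow (x\wedge y)\relxi(x\vee y)$, matching Lemma~\ref{lem:tol}(\ref{lem:tol:i2}), though that is beyond the present statement.)
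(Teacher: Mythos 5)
Your proof is correct and is essentially identical to the paper's: both apply meet- and join-compatibility to the pair $x \relxi z$, $y \relxi y$ and read off $x \wedge y \relxi z \wedge y$ and $x \vee y \relxi z \vee y$ as $x \relxi y$ and $y \relxi z$. The extra care you take to check that each instance of $\relxi$ is on a comparable pair is a reasonable bit of bookkeeping the paper leaves implicit.
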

\begin{proof} \disconnect
Since $y \relxi y$, by compatibility,
$x = x \wedge y \relxi z \wedge y = y$ and
$y = x \vee y \relxi z \vee y = z$.
\end{proof}

\begin{lemma} \label{lem:ord-tol}
A tolerance on a lattice restricted to
pairs of comparable elements of the lattice is a (unique) ordered
tolerance.
An ordered tolerance on a lattice can be extended to a
tolerance on the lattice, which is unique.
\end{lemma}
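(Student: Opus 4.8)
The first assertion is essentially immediate, so I would dispose of it in a sentence: the restriction of a tolerance $\relxi$ to comparable pairs is reflexive, and it inherits compatibility with meet and join verbatim --- if $x \relxi x'$ and $y \relxi y'$ with $x \leq x'$ and $y \leq y'$, then $x \wedge y \relxi x' \wedge y'$ and $x \vee y \relxi x' \vee y'$, and the two sides remain comparable ($x \wedge y \leq x' \wedge y'$, $x \vee y \leq x' \vee y'$) --- while uniqueness of a restriction is trivial.

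For the second assertion, the plan is to \emph{define} the extension rather than search for it: given an ordered tolerance $\relxi$, put $a \mathrel{\tau} b$ iff $a \wedge b \relxi a \vee b$. Lemma~\ref{lem:tol}(\ref{lem:tol:i2}) is what forces this definition, and it also yields uniqueness at once --- any tolerance $\zeta$ on $L$ whose restriction to quotients is $\relxi$ satisfies $a \mathrel{\zeta} b$ iff $a \wedge b \mathrel{\zeta} a \vee b$ iff $a \wedge b \relxi a \vee b$ iff $a \mathrel{\tau} b$, so $\zeta = \tau$. Reflexivity and symmetry of $\tau$ are clear from $a \wedge a = a = a \vee a$ and the commutativity of $\wedge$ and $\vee$ (plus reflexivity of $\relxi$), and $\tau$ restricts to $\relxi$ on a comparable pair $x \leq y$ because there $x \wedge y = x$ and $x \vee y = y$.

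The substantive step is checking that $\tau$ is compatible with $\vee$ and $\wedge$. For the join, suppose $a \mathrel{\tau} b$ and $c \mathrel{\tau} d$, that is, $a \wedge b \relxi a \vee b$ and $c \wedge d \relxi c \vee d$; compatibility of $\relxi$ with join gives $(a \wedge b) \vee (c \wedge d) \relxi (a \vee b) \vee (c \vee d)$. I would then note that the right-hand side equals $(a \vee c) \vee (b \vee d)$ and that the chain $(a \wedge b) \vee (c \wedge d) \leq (a \vee c) \wedge (b \vee d) \leq (a \vee b) \vee (c \vee d)$ holds, so by convexity of ordered tolerances (Lemma~\ref{lem:ord-tol-convex}) we get $(a \vee c) \wedge (b \vee d) \relxi (a \vee c) \vee (b \vee d)$, which is exactly $a \vee c \mathrel{\tau} b \vee d$. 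The meet case is dual: from the same hypotheses, compatibility of $\relxi$ with meet gives $(a \wedge b) \wedge (c \wedge d) \relxi (a \vee b) \wedge (c \vee d)$, the element $(a \wedge c) \vee (b \wedge d)$ sits between $a \wedge b \wedge c \wedge d$ and $(a \vee b) \wedge (c \vee d)$, and convexity delivers $(a \wedge c) \wedge (b \wedge d) \relxi (a \wedge c) \vee (b \wedge d)$, i.e.\ $a \wedge c \mathrel{\tau} b \wedge d$.

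I expect the only real obstacle to be spotting the right sandwich in the compatibility argument --- realizing that the relevant new meet or join lies inside a quotient whose endpoints are already related by $\relxi$, so that Lemma~\ref{lem:ord-tol-convex} closes the gap; the supporting lattice inequalities are routine. It is also worth remarking explicitly that the uniqueness argument, applied with $\zeta$ a given tolerance and $\relxi$ its own restriction, shows the restriction and extension maps are mutually inverse, so the two halves of the lemma assemble into a bijection between tolerances and ordered tolerances on $L$.
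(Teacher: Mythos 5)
Your proposal is correct and follows essentially the same route as the paper: define the extension on an arbitrary pair by relating its meet to its join, derive uniqueness from Lemma~\ref{lem:tol}(\ref{lem:tol:i2}), and verify compatibility with join via the sandwich $(a \wedge b) \vee (c \wedge d) \leq (a \vee c) \wedge (b \vee d) \leq (a \vee c) \vee (b \vee d)$ together with convexity, with the meet case dual. The only cosmetic difference is that you write out the meet case explicitly (correctly) where the paper simply invokes duality.
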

\begin{proof} \disconnect
The first statement is trivial.

To prove the second statement, consider
an ordered tolerance $\relrho$.
Define the
relationship $\relxi$ on all pairs of elements of the lattice by
$x \relxi y$ iff $x \wedge y \relrho x \vee y$.
Trivially, $\relxi$ extends $\relrho$.
By lem.~\ref{lem:tol}(\ref{lem:tol:i2}),
any tolerance which extends $\relrho$ must equal $\relxi$,
so it remains to prove that $\relxi$ is always a tolerance.

We show that $\relxi$ is compatible with join as follows:
Let $x, y, x^\prime, y^\prime$ be in the lattice and
$x \relxi y$ and $x^\prime \relxi y^\prime$.
Then $x \wedge y \relrho x \vee y$ and
$x^\prime \wedge y^\prime \relrho x^\prime \vee y^\prime$.
Since $\relrho$ is compatible,
\begin{equation*}
(x \wedge y) \vee (x^\prime \wedge y^\prime) \relrho
(x \vee y) \vee (x^\prime \vee y^\prime) =
(x \vee x^\prime) \vee (y \vee y^\prime). \tag{a}
\end{equation*}
Trivially,
$x \wedge y \leq x \vee x^\prime$,
$x \wedge y \leq y \vee y^\prime$,
$x^\prime \wedge y^\prime \leq x \vee x^\prime$, and
$x^\prime \wedge y^\prime \leq y \vee y^\prime$.
Combining these gives
\begin{equation*}
(x \wedge y) \vee (x^\prime \wedge y^\prime) \leq
(x \vee x^\prime) \wedge (y \vee y^\prime) \leq
(x \vee x^\prime) \vee (y \vee y^\prime). \tag{b}
\end{equation*}
By convexity, (a) and (b) imply
$$ (x \vee x^\prime) \wedge (y \vee y^\prime) \relrho
(x \vee x^\prime) \vee (y \vee y^\prime) $$
which means
$$ x \vee x^\prime \relxi y \vee y^\prime. $$
Thus $\relxi$ is compatible with join.

Dually, we show that $\relxi$ is compatible with meet.
Thus $\relxi$ is a tolerance.
\end{proof}

\paragraph{Partial bijections}
We will be frequently using partial bijections between two sets.
Note that the class of partial bijections (like bijections, but unlike
partial functions or ordinary
functions) is completely self-dual under the operation of inverse;
a mapping is a partial bijection iff its inverse is a partial
injection.

\begin{definition}
A \emph{partial bijection} $f$ from a \emph{source} set $S$ to
a \emph{target} set $T$ is an
injection from a (possibly empty) subset $\dom f$ of $S$ to a subset
$\im f$ of $T$.  A partial bijection is necessarily a bijection from $\dom f$
to $\im f$.
A partial bijection is defined to be \emph{empty} iff
$\dom f = \im f = \zeroslash$.
\end{definition}

\begin{definition}
For partial bijections, we use
the common notations for functions, but replacing ``$:$'' with
``$\ccolon$''.
$$ f \ccolon S \rightarrow T $$
denotes that $f$ is a partial bijection with source (not domain) $S$
and target (not image) $T$.
$$ f \ccolon x \mapsto \phi(x) $$
denotes that $f$ maps $x$ to $\phi(x)$, where the source, target,
domain, and image are given by the context.
$$ f \ccolon S \rightarrow T \ccolon x \mapsto \phi(x) $$
denotes that $f$ is a partial bijection with source $S$
and target $T$, but the domain is still given by the
context, usually the set of values $x$ for which the formula $\phi(x)$
is well-defined.
\end{definition}

\begin{lemma}
If $f \ccolon S \rightarrow T$, then $f^{-1} \ccolon T \rightarrow S$.
That is, the inverse of a partial bijection is a partial bijection.
$\dom f^{-1} = \im f$ and $\im f^{-1} = \dom f$.
\end{lemma}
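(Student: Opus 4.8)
The plan is to unwind the definition of partial bijection directly; there is essentially nothing to prove here beyond bookkeeping. By definition, $f \ccolon S \rightarrow T$ means that $f$ is an injection from a (possibly empty) subset $\dom f \subseteq S$ to a subset $\im f \subseteq T$, and, as already noted, $f$ is in particular a bijection from $\dom f$ onto $\im f$. First I would invoke the elementary fact that the inverse of a bijection $g \colon A \to B$ is a bijection $B \to A$; applying this with $g = f$ regarded as a bijection $\dom f \to \im f$ gives that $f^{-1}$ is a bijection from $\im f$ onto $\dom f$. In particular, $f^{-1}$ is an injection, its domain is $\im f$, and its image is $\dom f$, which already yields the two displayed equalities $\dom f^{-1} = \im f$ and $\im f^{-1} = \dom f$.

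Then I would observe that $\im f \subseteq T$ and $\dom f \subseteq S$, so $f^{-1}$ is an injection from a subset of $T$ to a subset of $S$; this is precisely the definition of a partial bijection with source $T$ and target $S$, i.e.\ $f^{-1} \ccolon T \rightarrow S$. The degenerate case $\dom f = \im f = \zeroslash$ needs no separate treatment: there $f^{-1}$ is the empty partial bijection and every assertion holds vacuously. I expect no real obstacle; the lemma simply records, in the notation just introduced, the self-duality of the class of partial bijections under inversion that was flagged in the preceding paragraph.
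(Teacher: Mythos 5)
Your proof is correct and is exactly the routine unwinding of the definition that the paper itself considers too immediate to write out (the lemma is stated without proof). Nothing is missing; the degenerate empty case is handled appropriately.
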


\begin{definition} \label{def:circ}
We use ``$\circ$'' to denote the composition of partial bijections,
that is if $f \ccolon S \rightarrow T$ and
$g \ccolon T \rightarrow U$,
$g \circ f \ccolon S \rightarrow U \ccolon x \mapsto g(f(x))$,
where $g \circ f$ is defined only for those
elements $x$ of $S$ for which $f$ is defined \emph{and} for which
$g$ is defined on $f(x)$.
\end{definition}

\begin{lemma}
Given $f \ccolon S \rightarrow T$ and
$g \ccolon T \rightarrow U$,
\begin{enumerate}
\item $\im g \circ f = g(\im f)$,
\item $\dom g \circ f = f^{-1}(\dom g)$, and
\item $(g \circ f)^{-1} = f^{-1} \circ g^{-1}$.
\end{enumerate}
\end{lemma}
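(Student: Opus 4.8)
The plan is to unwind the definition of composition (Definition \ref{def:circ}) and verify each of the three claims by a direct set-theoretic computation. Throughout, I read $g(\im f)$ in item~(1) as the image of the set $\im f$ under the partial bijection $g$, i.e. as $\{\, g(y) : y \in \im f \cap \dom g \,\}$; with the opposite convention (demanding $\im f \subseteq \dom g$) the statement would be false, so pinning this down is the one point that needs care.

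First I would establish (2), as it is the most basic. By Definition \ref{def:circ}, $g \circ f$ is defined exactly on those $x$ with $x \in \dom f$ and $f(x) \in \dom g$, which is precisely $f^{-1}(\dom g)$, recalling that $f^{-1} \ccolon T \rightarrow S$ is itself a partial bijection, so that $f^{-1}(\dom g)$ already means $\{\, x \in \dom f : f(x) \in \dom g \,\}$. Next, for (1), I would compute $\im(g \circ f) = \{\, g(f(x)) : x \in \dom(g \circ f) \,\}$; substituting the description of $\dom(g \circ f)$ just obtained and writing $y = f(x)$, this set equals $\{\, g(y) : y \in \im f \cap \dom g \,\} = g(\im f)$. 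It is worth remarking in passing that $g \circ f$ is a partial bijection: being a composition of the injections $f$ and $g$ restricted to a subset of $\dom f$, it is injective on its domain, which is what the notation $g \circ f \ccolon S \rightarrow U$ in Definition \ref{def:circ} presupposes.

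Finally, for (3): both $(g \circ f)^{-1}$ and $f^{-1} \circ g^{-1}$ are partial bijections $U \rightarrow S$, so it suffices to check that they have the same domain and agree there. Their domains match: $\dom\bigl((g \circ f)^{-1}\bigr) = \im(g \circ f) = g(\im f)$ by~(1), while applying~(2) to the pair $g^{-1} \ccolon U \rightarrow T$ and $f^{-1} \ccolon T \rightarrow S$ gives $\dom(f^{-1} \circ g^{-1}) = (g^{-1})^{-1}(\dom f^{-1}) = g(\im f)$ as well. For $u$ in this common domain, $(g \circ f)^{-1}(u)$ is by definition the unique $x \in \dom f$ with $f(x) \in \dom g$ and $g(f(x)) = u$; putting $y := f(x)$, the equation $g(y) = u$ says $y = g^{-1}(u)$, and then $x = f^{-1}(y) = f^{-1}(g^{-1}(u)) = (f^{-1} \circ g^{-1})(u)$.

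I do not anticipate a real obstacle; all three identities fall straight out of Definition \ref{def:circ} once the convention on $g(\im f)$ is fixed. An alternative for~(3) would be to appeal to the self-duality of partial bijections under inversion (as noted just before the definition of partial bijection), but the direct pointwise verification above is already short enough that I would simply present it.
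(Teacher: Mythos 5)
Your proof is correct. The paper states this lemma without proof, treating all three identities as immediate consequences of Definition~\ref{def:circ}; your direct set-theoretic verification (including the careful reading of $g(\im f)$ as the image of $\im f \cap \dom g$) is exactly the routine argument the paper leaves to the reader.
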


\begin{definition}
$\id_S$ denotes the identity map on the set $S$, which is also a
partial bijection with source and target $S$.
We state that a partial bijection $f$ \emph{equals} $\id_S$ when
$\dom f = \im f = S$ and $f(x) = x$ for every $x \in S$,
even if the source and/or
target of $f$ is larger than $S$.
\end{definition}

\section{Baseline axioms}

We start with a baseline set of gluing axioms copied
from \cite{Wor2025a}.  See that paper for the relationships between the
baseline axioms and the definitions used by Herrmann and Day.
We update the terminology by calling the glued system ``monotone'' if
it satisfies the full set of axioms --- following Herrmann --- but we
consider the ``monotony'' property to be separable from the rest of
the axioms; systems that satisfy the axioms other than monotony are
called ``polytone''.%
\footnote{We choose the term ``polytone'' solely because it formally
is an antonym of ``monotone''.}

Our numbering of axioms largely follows the numbering of
the axioms in \cite{Wor2025a}, changing ``MC'' (for
``modular connected'') to ``PS'' (for ``polytone system'').
We split (MC7) into (PS7) and (PS8),
update (MC8.1) to (PS9), and update (MC8.2) to (MS) (for ``monotone
system'').
We use ``$\dom \P{x}{y}$'' and ``$\im \P{x}{y}$'' rather than
the equivalent $\F{x}{y}$ and $\I{x}{y}$.
We consider the $\P{x}{y}$ as partial bijections from $L_x$ to $L_y$
rather than as bijections from $\F{x}{y} \subset L_x$ to
$\I{x}{y} \subset L_y$.
This revised definition of $\P{\bullet}{\bullet}$, combined with our
definition of ``$\circ$'' for partial bijections (def.~\ref{def:circ}),
has no substantive effect but
allows \forwardref{PS6} = (MC6) to be stated more simply.

\begin{definition} \label{def:polytone}
The \emph{baseline axioms}:
We define a \emph{polytone system} to be comprised of:
\begin{enumerate}
\item[\textbullet] a \emph{skeleton} lattice $S$,
\item[\textbullet] an \emph{overlap tolerance} $\relgamma$, which is
a tolerance on $S$,
\item[\textbullet] a family of \emph{blocks} $(L_x)_{x \in S}$, which
are lattices, and
\item[\textbullet] a family of \emph{connections}
$(\P{x}{y})_{x, y \in S, x \leq_\gamma y}$, which are mappings,
\end{enumerate}
that satisfies these axioms:
\begin{enumerate}
\itemlabel{(PS1)}{PS1} The skeleton $S$ is \lffc.
\itemlabel{(PS2)}{PS2} The blocks $L_x$ are \fmc.
\itemlabel{(PS3)}{PS3}
For $x \relgamma y$ in $S$,
each $\P{x}{y}$ is a partial bijection from source $L_x$ to target $L_y$,
$\dom \P{x}{y}$ is a filter of $L_x$,
$\im \P{x}{y}$ is an ideal of $L_y$,
and $\P{x}{y}$ is a lattice isomorphism.
\itemlabel{(PS4)}{PS4} For any $x \in S$, $\P{x}{x} = \id_{L_x}$.
\itemlabel{(PS5)}{PS5} If $x \leq_\gamma y \leq_\gamma z$ in $S$ and
$\im \P{x}{y} \cap \dom \P{y}{z} \neq \zeroslash$, then $x \relgamma z$.
\itemlabel{(PS6)}{PS6} For every $x \leq z \leq y$ in $S$ where
$x \relgamma y$, then $\P{x}{y} = \P{z}{y} \circ \P{x}{z}$.
\itemlabel{(PS7)}{PS7} For every $x, y \in S$ for which
$x \relgamma y$,
$\im \P{x}{x \vee y} \cap \im \P{y}{x \vee y} \subset
\im \P{x \wedge y}{x \vee y}$.
\itemlabel{(PS8)}{PS8} For every $x, y \in S$ for which
$x \relgamma y$,
$\dom \P{x \wedge y}{x} \cap \dom \P{x \wedge y}{y} \subset
\dom \P{x \wedge y}{x \vee y}$.
\itemlabel{(PS9)}{PS9} If $x \lessdot y$ in $S$, then $x \relgamma y$.
\end{enumerate}
\end{definition}

\begin{definition}
We define a \emph{monotone system} to be a polytone system that in
addition satisfies:
\begin{enumerate}
\itemlabel{(MS)}{MS} If $x \lessdot y$ in $S$,
$\dom \P{x}{y} \neq L_x$ and $\im \P{x}{y} \neq L_y$.
\end{enumerate}
\end{definition}

Our first change is to extend the definition of $\P{x}{y}$ to all
$x \leq y$ in $S$, even if $x \not\relgamma y$:

\begin{definition} \label{def:phi-extended}
We extend the definition of $\P{x}{y}$ to all
$x \leq y$ in $S$, even if $x \not\relgamma y$:
If $x \leq y$ in $S$ and $x \not\relgamma y$, we define
$\P{x}{y}$ to be the empty partial bijection with source $L_x$ and
target $L_y$.
\end{definition}

This extension does not affect satisfaction of the axioms
because the axioms never depend on $\P{x}{y}$ unless $x \relgamma y$.
This change will be used in both the minimal and maximal axioms.

\section{Minimal axioms}

Now we weaken the set of statements we must adduce to prove a polytone
system.  Rather than showing our efforts step-by-step,
we will give the end result of our efforts as a definition,
then work through the proof that it is equivalent to the baseline
axioms.

\paragraph{The axioms} \disconnect

\begin{definition}
Given a system of maps $(\W{x}{y})_{x,y \in S, x \lessdot y}$
where each
$\W{x}{y}$ is a partial bijection from source $L_x$ to target $L_y$,
we define, for any saturated chain
$x_0 \lessdot x_1 \lessdot \cdots \lessdot x_n$ of length $n \geq 0$,
$\W{x_\bullet}{*}$ = $\W{x_{n-1}}{x_n} \circ \cdots
\circ \W{x_1}{x_2} \circ \W{x_0}{x_1}$.
(If $n = 0$, then $\W{x_\bullet}{*} = \id_{L_{x_0}}$.)
\end{definition}

The names of the minimal axioms are distinguished by including ``\supmi''.
The axiom numbers are non-sequential, so that when
corresponding axiom numbers exist in the baseline and minimal axioms,
the axioms are similar.

\begin{definition} \label{def:polytone-min}
The \emph{minimal axioms}:
We define a \emph{polytone system} to be comprised of:
\begin{enumerate}
\item[\textbullet] a \emph{skeleton} lattice $S$,
\item[\textbullet] a family of \emph{blocks} $(L_x)_{x \in S}$, which
are lattices, and
\item[\textbullet] a family of \emph{connections}
$(\W{x}{y})_{x, y \in S, x \lessdot y}$, which are mappings,
\end{enumerate}
that satisfies these axioms:
\begin{enumerate}
\itemlabel{(PS1\supmi)}{PS1-} The skeleton $S$ is \lffc.
\itemlabel{(PS2\supmi)}{PS2-} The blocks $L_x$ are \fmc.
\itemlabel{(PS3\supmi)}{PS3-}
For $x \lessdot y$ in $S$,
each $\W{x}{y}$ is a partial bijection from source $L_x$ to target $L_y$,
$\dom \W{x}{y}$ is a filter of $L_x$,
$\im \W{x}{y}$ is an ideal of $L_y$,
and $\W{x}{y}$ is a lattice isomorphism.
\itemlabel{(PS6\supmi)}{PS10-} Either:
\begin{enumerate}
\itemlabel{(PS6\supmi$\wedge$)}{PS10-m}
For any $x \wedge y \lessdot x, y$ in $S$,
there exist saturated chains
$x = x_0 \lessdot x_1 \cdots \lessdot x_n = x \vee y$
and
$y = y_0 \lessdot y_1 \cdots \lessdot y_m = x \vee y$
such that $\W{x_\bullet}{*} \circ \W{x \wedge y}{x} =
\W{y_\bullet}{*} \circ \W{x \wedge y}{y}$, or
\itemlabel{(PS6\supmi$\vee$)}{PS10-j}
For any $x, y \lessdot x \vee y$ in $S$,
there exist saturated chains
$x \wedge y = x_0 \lessdot x_1 \cdots \lessdot x_n = x$
and
$x \wedge y = y_0 \lessdot y_1 \cdots \lessdot y_m = y$
such that $\W{x}{x \vee y} \circ \W{x_\bullet}{*} \circ  =
\W{y}{x \vee y} \circ \W{y_\bullet}{*}$.
\end{enumerate}
\itemlabel{(PS7\supmi$\wedge$)}{PS11-mm}
For any $x \wedge y \lessdot x, y$ in $S$,
there exists a saturated chain
$x \wedge y = z_0 \lessdot z_1 \cdots \lessdot z_n = x \vee y$
such that
$\dom \W{x \wedge y}{x} \cap \dom \W{x \wedge y}{y} \subset
\dom \W{z_\bullet}{*}$.
\itemlabel{(PS8\supmi$\vee$)}{PS11-jj}
For any $x, y \lessdot x \vee y$ in $S$,
there exists a saturated chain
$x \wedge y = z_0 \lessdot z_1 \cdots \lessdot z_n = x \vee y$
such that
$\im \W{x}{x \vee y} \cap \im \W{y}{x \vee y} \subset
\im \W{z_\bullet}{*}$.
\end{enumerate}
\end{definition}

The minimal axioms \ref{PS1-} and \ref{PS2-} are the same as baseline
axioms \ref{PS1} and \ref{PS2}.
\ref{PS3-} is a weaker form of \ref{PS3},
\ref{PS10-} is a weaker form of \ref{PS6},
\ref{PS11-mm} is a weaker form of \ref{PS7}, and
\ref{PS11-jj} is a weaker form of \ref{PS8}.

The additional axiom for a monotone system, (MS), becomes
\begin{enumerate}
\item[(MS)] \textit{If $x \lessdot y$ in $S$,
$\dom \W{x}{y} \neq L_x$, and $\im \W{x}{y} \neq L_y$.}
\end{enumerate}

The axiom (\ref{PS10-} has two alternative
forms which are each other's duals.  Thus there are actually two
sets of axioms, which we will show are equivalent.
The axioms with the antecedent ``$x \wedge y \lessdot x, y$''
have $\wedge$ differentiating their names, and
the axioms with the antecedent ``$x, y \lessdot x \vee y$''
have $\vee$ differentiating their names.

If $S$ is modular, then $x \wedge y \lessdot x, y$ iff
$x, y \lessdot x \vee y$ and so
\ref{PS10-m} becomes equivalent to \ref{PS10-j}.  However, $S$ is
often not modular.\citeHerr*{Satz~7.2}{Th.~7.2}.

\paragraph{Defining \texorpdfstring{$\P{\bullet}{\bullet}$}{\83\306}}
\disconnect

\begin{lemma} \label{lem:concat}
Given a saturated chain
$x_0 \lessdot x_1 \lessdot \cdots \lessdot x_n$
and $0 \leq i \leq n$,
$\W{(x_j)_{i \leq j \leq n}}{*} \circ \W{(x_j)_{0 \leq j \leq i}}{*} = \W{x_\bullet}{*}$.
\end{lemma}

\begin{lemma} \label{lem:chain-indep-meet}
If $(\W{x}{y})_{x,y \in S, x \lessdot y}$ is a system of maps satisfying:
\begin{enumerate}
\item for any $x \lessdot y$ in $S$,
$\W{x}{y}$ is a partial bijection from source $L_x$ to target $L_y$, and
\item the $\W{\bullet}{\bullet}$ satisfy \ref{PS10-m},
\end{enumerate}
then:
\begin{enumerate}
\item[] For any two saturated chains
$x_0 \lessdot x_1 \lessdot \cdots \lessdot x_n$ and
$y_0 \lessdot y_1 \lessdot \cdots \lessdot y_m$
with $x_0 = y_0 = a$ and $x_n = y_m = b$,
$\W{x_\bullet}{*} = \W{y_\bullet}{*}$.
\end{enumerate}
\end{lemma}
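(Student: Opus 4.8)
The plan is to prove, by strong induction on $|[a,b]|$ (finite by \ref{PS1-}), the statement that for every $a \le b$ in $S$ all saturated chains from $a$ to $b$ have the same composite. When $a = b$ or $a \lessdot b$ the saturated chain is unique and there is nothing to prove. For the inductive step, given saturated chains $C\colon a = x_0 \lessdot x_1 \lessdot \cdots \lessdot x_n = b$ and $D\colon a = y_0 \lessdot y_1 \lessdot \cdots \lessdot y_m = b$, I would first dispose of the case $x_1 = y_1$: here $[x_1,b]$ is a proper subinterval of $[a,b]$ (it omits $a$), so the inductive hypothesis makes the composites of the tails $x_1 \lessdot \cdots \lessdot b$ equal, and pre-composing with $\W{a}{x_1}$ via lem.~\ref{lem:concat} gives $\W{x_\bullet}{*} = \W{y_\bullet}{*}$. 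So it suffices to reduce the general case to two chains that agree on their first edge.

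That reduction is where the single genuine use of the axiom occurs. Suppose $x_1 \ne y_1$. Since $x_1$ and $y_1$ both cover $a$ and are distinct, $x_1 \wedge y_1 = a$, so $a = x_1 \wedge y_1 \lessdot x_1, y_1$ and \ref{PS10-m} applies: writing $c = x_1 \vee y_1$ (and noting $c \le b$), there are saturated chains $x_1 \lessdot \cdots \lessdot c$ and $y_1 \lessdot \cdots \lessdot c$ whose composites agree after pre-composition with $\W{a}{x_1}$, respectively $\W{a}{y_1}$. Now fix any saturated chain $c \lessdot \cdots \lessdot b$ and splice it on top of each: by repeated use of lem.~\ref{lem:concat} this produces saturated chains $E$ and $E'$ from $a$ to $b$, beginning $a \lessdot x_1 \lessdot \cdots$ and $a \lessdot y_1 \lessdot \cdots$, with $\W{E}{*} = \W{E'}{*}$ (the $c \lessdot \cdots \lessdot b$ segment being common to both, and the rest equal by the axiom). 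Since $C$ and $E$ share the first edge $a \lessdot x_1$, the previous paragraph, applied with the inductive hypothesis on $[x_1,b] \subsetneq [a,b]$, gives $\W{x_\bullet}{*} = \W{E}{*}$; symmetrically, using $[y_1,b] \subsetneq [a,b]$, $\W{y_\bullet}{*} = \W{E'}{*}$. Chaining the three equalities yields $\W{x_\bullet}{*} = \W{y_\bullet}{*}$.

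I expect the main obstacle to be conceptual rather than computational: because $S$ is typically not modular, maximal chains of $[a,b]$ need not have equal length and there is no Jordan--H\"older ``diamond move'' connectivity between them, so no argument that rewrites one chain into another by local moves is available, and one must avoid invoking grading or semimodularity anywhere. The device that circumvents this is to route both chains through $x_1 \vee y_1$ and then through a single common saturated chain up to $b$, so the axiom is invoked only at the bottom covering square while everything above is absorbed into the induction on interval size. The remaining care is purely bookkeeping: checking that $[x_1,b]$ and $[y_1,b]$ are strictly smaller than $[a,b]$ so the induction is well-founded, and that lem.~\ref{lem:concat} is applied correctly when a composite is broken into three consecutive segments.
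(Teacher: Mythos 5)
Your proposal is correct and follows essentially the same route as the paper's proof: induct on the size of $[a,b]$, handle the case of a common first edge via the inductive hypothesis on $[x_1,b]$, and otherwise apply \ref{PS10-m} at the covering square over $a=x_1\wedge y_1$ and splice a common saturated chain from $x_1\vee y_1$ up to $b$. The only differences are cosmetic (strong induction on cardinality rather than on intervals ordered by inclusion, and packaging the final computation as three chained equalities through auxiliary chains $E$, $E'$).
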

\begin{proof} \disconnect
We prove the conclusion by induction on the interval $[a, b]$,
ordered by inclusion, starting with the smallest interval.
The base cases are $a = b$, in which case
$\W{x_\bullet}{*} = \W{y_\bullet}{*} = \id_{a}$.

If $a < b$ then $n, m \geq 1$, $x_1$ and $y_1$ exist, and
$a \lessdot x_1, y_1$.  If $x_1 = y_1$, by induction on $[x_1, b]$,
$\W{(x_i)_{1 \leq i \leq n}}{*} = \W{(y_i)_{1 \leq i \leq m}}{*}$,
and by lem.~\ref{lem:concat},
$\W{(x_i)_{0 \leq i \leq n}}{*} = \W{(y_i)_{0 \leq i \leq m}}{*}$.

If $x_1 \neq y_1$, then $x_1 \wedge y_1 = a$.
Applying antecedent (2),
there exist saturated chains
$x_1 = x^\prime_0 \lessdot x^\prime_1 \cdots \lessdot x^\prime_p = x_1 \vee y_1$
and
$y_1 = y^\prime_0 \lessdot y^\prime_1 \cdots \lessdot y^\prime_q = x_1 \vee y_1$
such that $\W{x_\bullet^\prime}{*} \circ \W{a}{x_1} =
\W{y_\bullet^\prime}{*} \circ \W{a}{y_1}$.
And since $x_1, y_1 \leq b$, $x_1 \vee y_1 \leq b$, there exists a
saturated chain
$x_1 \vee y_1 = z_0 \lessdot z_1 \lessdot \cdots \lessdot z_r = b$.
See fig.~\ref{fig:cim} for the configuration.
\begin{figure}[ht]
\hbox to \linewidth{\hfill%
\includegraphics[width=1in]{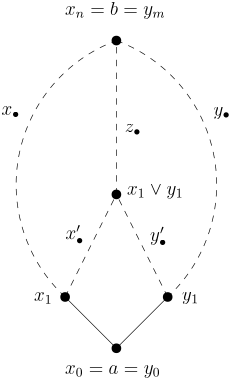}%
\hfill}
\caption{Configuration in the proof of lem.~\ref{lem:chain-indep-meet}}
\label{fig:cim}
\end{figure}

Then, repeatedly applying lem.~\ref{lem:concat}:
\begin{alignat*}{2}
\W{x_\bullet}{*}
& = \W{(x_i)_{1 \leq i \leq n}}{*} \circ \W{a}{x_1} \\
\interject{by induction on $[x_1, b]$,}
& = \W{z_\bullet}{*} \circ \W{x_\bullet^\prime}{*} \circ \W{a}{x_1} \\
\interject{by antecedent (2),}
& = \W{z_\bullet}{*} \circ \W{y_\bullet^\prime}{*} \circ \W{a}{y_1} \\
\interject{by induction on $[y_1, b]$,}
& = \W{(y_i)_{1 \leq i \leq m}}{*} \circ \W{a}{y_1} \\
& =\W{y_\bullet}{*}
\end{alignat*}
\end{proof}

Dually, we prove:

\begin{lemma} \label{lem:chain-indep-join}
If $(\W{x}{y})_{x,y \in S, x \lessdot y}$ is a system of maps satisfying:
\begin{enumerate}
\item for any $x \lessdot y$ in $S$,
$\W{x}{y}$ is a partial bijection from source $L_x$ to target $L_y$, and
\item the $\W{\bullet}{\bullet}$ satisfy \ref{PS10-j},
\end{enumerate}
then:
\begin{enumerate}
\item[] For any two saturated chains
$x_0 \lessdot x_1 \lessdot \cdots \lessdot x_n$ and
$y_0 \lessdot y_1 \lessdot \cdots \lessdot y_m$
with $x_0 = y_0 = a$ and $x_n = y_n = b$,
$\W{x_\bullet}{*} = \W{y_\bullet}{*}$.
\end{enumerate}
\end{lemma}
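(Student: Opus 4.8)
The plan is to copy the proof of lemma~\ref{lem:chain-indep-meet} almost verbatim, but read each interval from the \emph{top} rather than the bottom: where that proof peeled off the first covering step $a \lessdot x_1$ and invoked \ref{PS10-m}, this one peels off the last covering step $x_{n-1} \lessdot b$ and invokes \ref{PS10-j}. Concretely, I would induct on the interval $[a,b]$ ordered by inclusion, smallest first (this is well-founded for the same reason as before, $S$ being \lffc by \ref{PS1-}, so $[a,b]$ is finite and all the saturated chains in sight exist and are finite). The base case $a=b$ is immediate: both composites are $\id_{L_a}$.

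For the inductive step with $a<b$ we have $n,m\ge 1$, so $x_{n-1}$ and $y_{m-1}$ exist and $x_{n-1},y_{m-1}\lessdot b$. If $x_{n-1}=y_{m-1}$, the truncated chains $x_0\lessdot\cdots\lessdot x_{n-1}$ and $y_0\lessdot\cdots\lessdot y_{m-1}$ both run from $a$ to the common element $x_{n-1}<b$, so the induction hypothesis on the proper subinterval $[a,x_{n-1}]$ gives $\W{(x_i)_{0\le i\le n-1}}{*}=\W{(y_i)_{0\le i\le m-1}}{*}$, and lemma~\ref{lem:concat} upgrades this to $\W{x_\bullet}{*}=\W{y_\bullet}{*}$. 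If $x_{n-1}\neq y_{m-1}$, then since both are covered by $b$ we have $x_{n-1}\vee y_{m-1}=b$; put $c:=x_{n-1}\wedge y_{m-1}$, so $a\le c$ and $x_{n-1},y_{m-1}\lessdot x_{n-1}\vee y_{m-1}$. Applying \ref{PS10-j} to the pair $x_{n-1},y_{m-1}$ yields saturated chains $c=x'_0\lessdot\cdots\lessdot x'_p=x_{n-1}$ and $c=y'_0\lessdot\cdots\lessdot y'_q=y_{m-1}$ with $\W{x_{n-1}}{b}\circ\W{x'_\bullet}{*}=\W{y_{m-1}}{b}\circ\W{y'_\bullet}{*}$; fix in addition any saturated chain $a=z_0\lessdot\cdots\lessdot z_r=c$. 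Then, using lemma~\ref{lem:concat}, the induction hypothesis on the proper subintervals $[a,x_{n-1}]$ and $[a,y_{m-1}]$, and \ref{PS10-j}, one gets $\W{x_\bullet}{*}=\W{x_{n-1}}{b}\circ\W{(x_i)_{0\le i\le n-1}}{*}=\W{x_{n-1}}{b}\circ\W{x'_\bullet}{*}\circ\W{z_\bullet}{*}=\W{y_{m-1}}{b}\circ\W{y'_\bullet}{*}\circ\W{z_\bullet}{*}=\W{y_{m-1}}{b}\circ\W{(y_i)_{0\le i\le m-1}}{*}=\W{y_\bullet}{*}$, closing the induction. (For the second and fourth equalities one applies the induction hypothesis to the two saturated chains $x_0\lessdot\cdots\lessdot x_{n-1}$ and $z_0\lessdot\cdots\lessdot z_r\lessdot x'_1\lessdot\cdots\lessdot x'_p$ from $a$ to $x_{n-1}$, and symmetrically for $y$.)

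The step I expect to be most error-prone — and which is the entire substance of the word ``dually'' here — is keeping straight which side each factor composes on. Because \ref{PS10-j} produces chains \emph{ascending} from $x\wedge y$ into $x$ and $y$ and its conclusion \emph{post-}composes the top maps $\W{x}{x\vee y},\W{y}{x\vee y}$, the recursion must be run on the \emph{lower} intervals $[a,x_{n-1}]$ and $[a,y_{m-1}]$ and the bridging chain through $c$ must be inserted on the \emph{source} side, as the right-most factor $\W{z_\bullet}{*}$; this is the mirror image of lemma~\ref{lem:chain-indep-meet}, where the bridging chain sat on the target side. Everything else — finiteness of $[a,b]$, existence of the saturated chains, and the concatenation identities — is identical to lemma~\ref{lem:chain-indep-meet} and is handled by \ref{PS1-} and lemma~\ref{lem:concat}.
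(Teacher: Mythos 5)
Your proof is correct and is exactly what the paper intends by its one-word proof ``Dually'': you dualize the argument of lem.~\ref{lem:chain-indep-meet} by peeling off the top covering step $x_{n-1},y_{m-1}\lessdot b$ instead of the bottom one, invoking \ref{PS10-j} at the pair $x_{n-1},y_{m-1}$ (noting $x_{n-1}\vee y_{m-1}=b$), and inserting the bridging chain from $a$ to $c=x_{n-1}\wedge y_{m-1}$ as the right-most factor. The composition order and the choice of which subintervals carry the induction are handled correctly, so there is nothing to add.
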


Thus, given the antecedents of either lem.~\ref{lem:chain-indep-meet} or
lem.~\ref{lem:chain-indep-join}, we can define $\P{x}{y}$ for any
$x \leq y$ in $S$:

\begin{definition} \label{def:phi-from-omega}
Given a system of maps $(\W{x}{y})_{x,y \in S, x \lessdot y}$
that satisfies \ref{PS10-},
we define, for any $x \leq y$ in $S$,
$\P{x}{y} = \W{s_\bullet}{*}$ for an arbitrarily chosen saturated chain
$x = s_0 \lessdot s_1 \lessdot \cdots \lessdot s_n = y$.
By lem.~\ref{lem:chain-indep-meet} or lem.~\ref{lem:chain-indep-join}, the
choice of $s_\bullet$ makes no difference.
\end{definition}

\begin{lemma} \label{lem:phi-from-omega}
Given a system of maps $(\W{x}{y})_{x,y \in S, x \lessdot y}$
that satisfies \ref{PS3-} and \ref{PS10-},
if $\P{x}{y}$ is not empty for some $x \leq y$ in $S$,
then
$\P{x}{y}$ is a partial bijection from $L_x$ to $L_y$,
$\dom \P{x}{y}$ is a filter of $L_x$,
$\im \P{x}{y}$ is an ideal of $L_y$,
and $\P{x}{y}$ is a lattice isomorphism.
For any $x \ S$, $\P{x}{x} = \id_{L_x}$.
\end{lemma}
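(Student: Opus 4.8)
The plan is to push everything down to the single‑edge axiom \ref{PS3-} together with one bookkeeping fact about composing lattice isomorphisms that are partial bijections. First recall that, by def.~\ref{def:phi-from-omega}, $\P{x}{y}$ is $\W{s_\bullet}{*}$ for an arbitrary saturated chain $x = s_0 \lessdot s_1 \lessdot \cdots \lessdot s_n = y$; such a chain exists because $S$ is locally finite and has finite covers (\ref{PS1-}), so $[x,y]$ is finite, and the value is independent of the chain by lem.~\ref{lem:chain-indep-meet} or lem.~\ref{lem:chain-indep-join} (where \ref{PS3-} supplies antecedent~(1) of those lemmas and \ref{PS10-} supplies antecedent~(2)). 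The assertion $\P{x}{x} = \id_{L_x}$ is then immediate: the only saturated chain from $x$ to $x$ has length $0$, so $\W{s_\bullet}{*} = \id_{L_{x}}$ by the definition of $\W{x_\bullet}{*}$.

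The heart of the argument will be an induction on the length $n$ of the chain, proving: if $\W{s_\bullet}{*}$ is nonempty, then it is a partial bijection from $L_x$ to $L_y$, it is a lattice isomorphism, $\dom \W{s_\bullet}{*}$ is a filter of $L_x$, and $\im \W{s_\bullet}{*}$ is an ideal of $L_y$. The base case $n = 0$ is $\id_{L_x}$, whose domain and image are all of $L_x$, an (improper) filter and ideal. For $n \geq 1$, I would use lem.~\ref{lem:concat} to write $\W{s_\bullet}{*} = g \circ f$ with $f = \W{(s_i)_{0 \leq i \leq n-1}}{*}$ and $g = \W{s_{n-1}}{s_n}$. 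Since $\dom(g \circ f) \subseteq \dom f$ and $\im(g \circ f) \subseteq \im g$, nonemptiness of $g \circ f$ forces $f$ and $g$ to be nonempty; so the inductive hypothesis applies to $f$ and \ref{PS3-} to $g$, giving that $f$ is a lattice isomorphism from the filter $\dom f \subseteq L_x$ onto the ideal $\im f \subseteq L_{s_{n-1}}$ and $g$ a lattice isomorphism from the filter $\dom g \subseteq L_{s_{n-1}}$ onto the ideal $\im g \subseteq L_{s_n} = L_y$.

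The remaining gluing step is the only place that needs care. Set $M = \im f \cap \dom g$, which is nonempty because $g \circ f$ is, and which, being the intersection inside $L_{s_{n-1}}$ of the ideal $\im f$ with the filter $\dom g$, is a convex sublattice. A short verification shows $M$ is a filter of the lattice $\im f$ (upward closed and meet closed there) and, dually, an ideal of the lattice $\dom g$. Hence $\dom(g \circ f) = f^{-1}(M)$ is a filter of $\dom f$ (transporting the filter $M$ of $\im f$ back along the isomorphism $f$), and a filter of the filter $\dom f$ of $L_x$ is a filter of $L_x$; dually $\im(g \circ f) = g(M)$ is an ideal of the ideal $\im g$ of $L_y$, hence an ideal of $L_y$. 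Finally, restricting $f$ and $g$ to $M$ appropriately gives isomorphisms of sublattices whose composite is $g \circ f$, so $g \circ f$ is itself a lattice isomorphism. Applying this induction to the chain fixed in def.~\ref{def:phi-from-omega} yields the lemma.

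I expect the one real obstacle to be getting the filter/ideal bookkeeping exactly right: namely that the intersection $M$ of an ideal and a filter of a lattice is simultaneously a filter of that ideal and an ideal of that filter, and that ``a filter of a filter is a filter'' and ``an ideal of an ideal is an ideal'', so that the structural property survives all the way down to $L_x$ and up to $L_y$. It is also worth being explicit about the convention that the domain (resp.\ image) of a nonempty partial bijection is a nonempty, though possibly improper, filter (resp.\ ideal), so that the degenerate empty connections are exactly the ones excluded by the hypothesis. Everything else --- existence and chain‑independence of the saturated chain, and the identity case --- is immediate from results already proved.
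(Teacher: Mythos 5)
The paper states this lemma without proof, treating it as routine, so there is no written argument to compare against; your proposal supplies exactly the argument the author evidently has in mind. The induction on chain length via lem.~\ref{lem:concat}, the reduction of each step to \ref{PS3-}, and the bookkeeping that $\im f \cap \dom g$ is a filter of the ideal $\im f$ and an ideal of the filter $\dom g$ (together with ``a filter of a filter is a filter'' and its dual, valid because filters and ideals are sublattices) are all correct, and the length-$0$ case gives $\P{x}{x} = \id_{L_x}$. The only implicit ingredient worth flagging is that the existence of the saturated chain rests on \ref{PS1-}, which the lemma's hypotheses do not list but which def.~\ref{def:phi-from-omega} already presupposes.
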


\begin{lemma} \label{lem:phi-concat}
Given a system of maps $(\W{x}{y})_{x,y \in S, x \lessdot y}$
that satisfies \ref{PS10-},
and $x \leq y \leq z$ in $S$, then
$\P{x}{z} = \P{y}{z} \circ \P{x}{y}$.
\end{lemma}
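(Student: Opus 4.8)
The plan is to reduce the claim directly to the definition of $\P{\bullet}{\bullet}$ in def.~\ref{def:phi-from-omega}: I would evaluate all three connections $\P{x}{z}$, $\P{x}{y}$, $\P{y}{z}$ along segments of one common saturated chain that runs from $x$ through $y$ to $z$, and then split the corresponding composite of $\omega$'s at $y$. Concretely, since saturated chains between comparable elements exist (as $S$ is \lffc{} by \ref{PS1-}, and as is already presupposed by def.~\ref{def:phi-from-omega}), I would pick a saturated chain $x = s_0 \lessdot s_1 \lessdot \cdots \lessdot s_k = y$ and a saturated chain $y = s_k \lessdot s_{k+1} \lessdot \cdots \lessdot s_n = z$; their concatenation $x = s_0 \lessdot s_1 \lessdot \cdots \lessdot s_n = z$ is again a saturated chain, now from $x$ to $z$.

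Since the system satisfies \ref{PS10-}, one of lem.~\ref{lem:chain-indep-meet} and lem.~\ref{lem:chain-indep-join} applies, so by def.~\ref{def:phi-from-omega} the value $\W{s_\bullet}{*}$ does not depend on the chosen saturated chain. Hence I may compute
\[
\P{x}{z} = \W{(s_j)_{0 \leq j \leq n}}{*}, \qquad \P{x}{y} = \W{(s_j)_{0 \leq j \leq k}}{*}, \qquad \P{y}{z} = \W{(s_j)_{k \leq j \leq n}}{*},
\]
using, respectively, the full common chain and its initial and final segments. Then lem.~\ref{lem:concat}, applied with $i = k$, gives $\W{(s_j)_{0 \leq j \leq n}}{*} = \W{(s_j)_{k \leq j \leq n}}{*} \circ \W{(s_j)_{0 \leq j \leq k}}{*}$, which is exactly $\P{x}{z} = \P{y}{z} \circ \P{x}{y}$.

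I do not expect a genuine obstacle here: the substantive content has already been packaged into lem.~\ref{lem:concat} and into the chain-independence lemmas. The only point deserving a moment's care is the legitimacy of evaluating the three connections along segments of a single common chain rather than along independently chosen chains — but that is precisely what chain independence licenses. No case analysis beyond the disjunction already built into \ref{PS10-} is needed, and the argument is unaffected by whether any of the three partial bijections in question happens to be empty, since composition of partial bijections is defined in all cases (def.~\ref{def:circ}).
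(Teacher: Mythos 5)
Your argument is correct and is exactly the routine verification the paper leaves implicit (it states lem.~\ref{lem:phi-concat} without a proof): concatenate a saturated chain from $x$ to $y$ with one from $y$ to $z$, invoke chain independence (lem.~\ref{lem:chain-indep-meet} or \ref{lem:chain-indep-join} via def.~\ref{def:phi-from-omega}) to evaluate all three connections on segments of that single chain, and split at $y$ by lem.~\ref{lem:concat}. No gaps.
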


\paragraph{Strengthening \texorpdfstring{\ref{PS11-mm} and \ref{PS11-jj}}%
{(PS7\9040\023\9042\047) and (PS8\9040\023\9042\050)}} \disconnect

\begin{lemma} \label{lem:join-from-cover}
Let $L$ be a finite lattice and $D$ be a down-set%
\footnote{A \emph{down-set} or \emph{poset ideal} of a poset $L$
is a subset $D \subset L$ where
if $x \in D$ and $y \leq x$ in $L$ then $y \in D$.}
of $L$.
Suppose that for any $a, b \in L$, $a, b \in D$ and
$a \wedge b \lessdot a, b$ imply that $a \vee b \in D$.
Then $a, b \in D$ implies that $a \vee b \in D$.
\end{lemma}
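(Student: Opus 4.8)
The plan is to prove the statement by induction on the cardinality of the interval $[a \wedge b, a \vee b]$, which is a finite number since $L$ is finite. The conclusion is trivial when this interval is a single point, and more generally I will first dispose of the case where $a$ and $b$ are comparable: then $a \vee b$ equals $a$ or $b$ and so lies in $D$. Hence in the induction step I may assume $c := a \wedge b$ is strictly below both $a$ and $b$.

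Since $L$ is finite, I choose elements $a'$ and $b'$ with $c \lessdot a' \le a$ and $c \lessdot b' \le b$ (take the first step of a maximal chain running from $c$ up to $a$, respectively to $b$). Because $D$ is a down-set, $a', b' \in D$. From $c \le a' \wedge b' \le a \wedge b = c$ we get $a' \wedge b' = c$, and since $c \lessdot a'$ and $c \lessdot b'$ the covering hypothesis applies and produces $e := a' \vee b' \in D$. This is the only place the hypothesis is used.

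Now I bridge to $a \vee b$ in two steps. Since $a' \le a$, we have $a \vee e = a \vee a' \vee b' = a \vee b' \le a \vee b$, while $a \wedge e \ge a'$ because $a' \le a$ and $a' \le e$; hence the interval $[a \wedge e,\, a \vee e]$ is contained in $[a',\, a \vee b]$, which is a proper subset of $[c,\, a \vee b]$ as it omits $c$. Applying the induction hypothesis to the pair $a, e \in D$ therefore yields $a \vee b' = a \vee e \in D$. Repeating the argument with the pair $a \vee b', b \in D$: its join is $(a \vee b') \vee b = a \vee b$ and its meet is $\ge b'$ (since $b' \le a \vee b'$ and $b' \le b$), so its interval lies in $[b',\, a \vee b] \subsetneq [c,\, a \vee b]$; the induction hypothesis gives $a \vee b \in D$, completing the induction.

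The one point requiring care is the choice of the inductive quantity. Inducting on something like the height of $a \vee b$ does not work, since passing from $b$ to a cover $b'$ of $c$ need not lower $a \vee b'$ strictly below $a \vee b$. The size of the interval $[a \wedge b, a \vee b]$ is the right measure precisely because replacing either argument by a cover of $c = a \wedge b$ forces the new meet strictly above $c$, so that interval strictly shrinks; everything else is routine lattice manipulation.
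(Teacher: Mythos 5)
Your proof is correct and takes essentially the same route as the paper's: induction on the (finite) interval $[a \wedge b,\, a \vee b]$ ordered by containment/size, choosing covers $a'$ and $b'$ of the meet below $a$ and $b$, applying the covering hypothesis once to obtain $a' \vee b' \in D$, and then climbing to $a \vee b$ via the induction hypothesis. The only differences are cosmetic: you reach $a \vee b$ in two inductive applications (through $a \vee b'$) where the paper uses three (forming $a \vee y_1$ and $b \vee x_1$ separately before combining them), and you make explicit the application of the hypothesis to the pair of covers, which the paper leaves implicit.
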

\begin{proof} \disconnect
We prove the consequent by induction on the pairs $(a, b)$, which we
order by the containment of the (finite) intervals
$[a \wedge b, a \vee b]$, with the smallest intervals first.

Assume $a, b \in D$.
Choose saturated chains
$a \wedge b = x_0 \lessdot x_1 \lessdot \cdots x_n = a$ and
$a \wedge b = y_0 \lessdot y_1 \lessdot \cdots y_m = b$.
If $n = 0$, then $a \wedge b = a$, so $a \leq b$ and
$a \vee b = b \in D$ by hypothesis.
Similarly, if $m = 0$, then $a \wedge b = b$, so $b \leq a$ and
$a \vee b = a \in D$ by hypothesis.

The remaining case is $n, m \geq 1$, so
$x_1$ and $y_1$ exist,
$a \wedge b \lessdot x_1 \leq a$, $x_1 \not\leq b$,
$a \wedge b \lessdot y_1 \leq b$, and $y_1 \not\leq a$.
See fig.~\ref{fig:jfc} for the configuration.
\begin{figure}[ht]
\hbox to \linewidth{\hfill%
\includegraphics[width=2in]{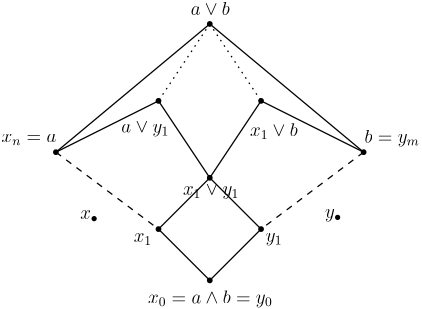}%
\hfill}
\caption{Configuration in the proof of lem.~\ref{lem:join-from-cover}}
\label{fig:jfc}
\end{figure}

Inductively apply the consequent to $a$ and $x_1 \vee y_1$, for
which $a \wedge (x_1 \vee y_1) \geq x_1 \gtrdot a \wedge b$ and
$a \vee (x_1 \vee y_1) = a \vee y_1 \leq a \vee b$,
so $[a \wedge (x_1 \vee y_1), a \vee y_1]$ is a proper subinterval of
$[a \wedge b, a \vee b]$.
Thus $a \vee y_1 = a \vee (x_1 \vee y_1) \in D$.

Similarly, inductively apply the consequent to $b$ and $x_1 \vee y_1$, for
which $b \wedge (x_1 \vee y_1) \geq y_1 \gtrdot a \wedge b$ and
$b \vee (x_1 \vee y_1) = b \vee x_1 \leq a \vee b$,
so $[b \wedge (x_1 \vee y_1), x \vee b]$ is a proper subinterval of
$[a \wedge b, a \vee b]$.
Thus $b \vee x_1 = b \vee (x_1 \vee y_1) \in D$.

Finally, inductively apply the consequent to $a \vee y_1$ and $b \vee x_1$,
for which
$(a \vee y_1) \wedge (b \vee x_1) \geq x_1 \vee y_1 > x_1 \wedge y_1$ and
$(a \vee y_1) \vee (b \vee y_1) = a \vee b$, so
$[(a \vee y_1) \wedge (b \vee x_1), a \vee b]$ is a proper subinterval of
$[a \wedge b, a \vee b]$.
Thus, $a \vee b \in D$.
\end{proof}

Dually, we prove:

\begin{lemma} \label{lem:meet-from-cover}
Let $L$ be a finite lattice and $U$ be an up-set%
\footnote{An \emph{up-set} or \emph{poset filter} of a poset $L$
is a set $U$ where
if $x \in U$ and $x \leq y$ in $L$ then $y \in U$.}
of $L$.
Suppose that for any $a, b \in L$, $a, b \in U$ and
$a, b \lessdot a \vee b$ imply that $a \wedge b \in U$.
Then $a, b \in U$ implies that $a \wedge b \in U$.
\end{lemma}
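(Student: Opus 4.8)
The plan is to deduce this from Lemma~\ref{lem:join-from-cover} by order-duality. Applying Lemma~\ref{lem:join-from-cover} to the dual lattice $L^{\mathrm{op}}$ with $D := U$ does the job at once: in $L^{\mathrm{op}}$ the up-set $U$ of $L$ is a down-set, $\vee$ and $\wedge$ swap roles, the relation ``$a, b \lessdot a \vee b$'' of $L$ becomes exactly the relation ``$a \wedge b \lessdot a, b$'' of $L^{\mathrm{op}}$, and the two finite lattices have the same intervals, so the hypothesis and conclusion of Lemma~\ref{lem:join-from-cover} translate into precisely the statement to be proved. I would state the proof this way and leave the mechanical dictionary to the reader.

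For readers who prefer an explicit argument, I would instead transcribe the induction from the proof of Lemma~\ref{lem:join-from-cover}: induct on pairs $(a,b)$ ordered by containment of the finite intervals $[a \wedge b, a \vee b]$, smallest first. Given $a, b \in U$, choose saturated chains $a \vee b = x_0 \gtrdot x_1 \gtrdot \cdots \gtrdot x_n = a$ and $a \vee b = y_0 \gtrdot y_1 \gtrdot \cdots \gtrdot y_m = b$. If $n = 0$ then $b \leq a$, so $a \wedge b = b \in U$ by hypothesis; symmetrically if $m = 0$. Otherwise $n, m \geq 1$, so $x_1, y_1 \lessdot a \vee b$ with $a \leq x_1$, $x_1 \not\geq b$, $b \leq y_1$, $y_1 \not\geq a$. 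Here $x_1 \neq y_1$ (else $a \vee b \leq x_1$), so $x_1 \vee y_1 > x_1$ forces $x_1 \vee y_1 = a \vee b$, i.e.\ $x_1, y_1 \lessdot x_1 \vee y_1$; since $U$ is an up-set we have $x_1, y_1 \in U$, whence $x_1 \wedge y_1 \in U$ by hypothesis. Now apply the consequent inductively three times: to $(a, x_1 \wedge y_1)$, giving $a \wedge y_1 = a \wedge (x_1 \wedge y_1) \in U$; to $(b, x_1 \wedge y_1)$, giving $b \wedge x_1 \in U$; and finally to $(a \wedge y_1, b \wedge x_1)$, giving $(a \wedge y_1) \wedge (b \wedge x_1) = a \wedge b \in U$.

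The one point needing care — the direct analogue of the interval-shrinking bookkeeping in Lemma~\ref{lem:join-from-cover} — is checking that each of the three pairs above spans a proper subinterval of $[a \wedge b, a \vee b]$. For the first two, the join endpoint drops to at most $x_1$ (resp.\ $y_1$), which is strictly below $a \vee b$, while the meet endpoint stays $\geq a \wedge b$; for the last, the meet endpoint is exactly $a \wedge b$ while the join endpoint stays $\leq x_1 \wedge y_1 < a \vee b$. Beyond this the argument is routine, and I do not expect any genuinely new obstacle once Lemma~\ref{lem:join-from-cover} is in hand; the work is purely in transcribing the meet/join and cover/co-cover roles correctly, which is why invoking $L^{\mathrm{op}}$ is the cleaner route.
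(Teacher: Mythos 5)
Your proposal is correct and matches the paper exactly: the paper's entire proof of this lemma is the single phrase ``Dually, we prove,'' i.e.\ precisely the dualization of Lemma~\ref{lem:join-from-cover} that you carry out. Your explicit transcription of the dual induction (including the interval-shrinking checks) is also accurate, just more detailed than the paper bothers to be.
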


\begin{lemma} \label{lem:PS11-mm-stronger}
Given a system that satisfies the minimal axioms,
for any $x, y \in S$,
$\dom \P{x \wedge y}{x} \cap \dom \P{x \wedge y}{y} =
\dom \P{x \wedge y}{x \vee y}$.
\end{lemma}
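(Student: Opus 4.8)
The plan is to prove the two inclusions separately. The inclusion $\dom \P{x \wedge y}{x \vee y} \subseteq \dom \P{x \wedge y}{x} \cap \dom \P{x \wedge y}{y}$ is immediate: since $x \wedge y \leq x \leq x \vee y$ and $x \wedge y \leq y \leq x \vee y$, lem.~\ref{lem:phi-concat} gives $\P{x \wedge y}{x \vee y} = \P{x}{x \vee y} \circ \P{x \wedge y}{x} = \P{y}{x \vee y} \circ \P{x \wedge y}{y}$, and the domain of a composition of partial bijections is contained in the domain of the inner map. The real content is the reverse inclusion, and the idea is to lift the covering-case axiom \ref{PS11-mm} to arbitrary pairs by means of lem.~\ref{lem:join-from-cover}.

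To that end, fix $a := x \wedge y$ and an element $v \in \dom \P{a}{x} \cap \dom \P{a}{y}$; it suffices to show $v \in \dom \P{a}{x \vee y}$. I would work inside the interval $L := [a, x \vee y]$ of $S$, which is a finite lattice because $S$ is \lffc, and which, being a convex sublattice, has the same meets, joins, and covering relations as $S$. Put
$$ D := \{\, z \in L \mvert v \in \dom \P{a}{z} \,\}. $$
By lem.~\ref{lem:phi-concat}, if $a \leq z' \leq z$ then $\P{a}{z} = \P{z'}{z} \circ \P{a}{z'}$, so $\dom \P{a}{z} \subseteq \dom \P{a}{z'}$; hence $D$ is a down-set of $L$, and $x, y \in D$ by the choice of $v$. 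If $D$ satisfies the hypothesis of lem.~\ref{lem:join-from-cover} --- that $c, d \in D$ with $c \wedge d \lessdot c, d$ forces $c \vee d \in D$ --- then that lemma yields $x \vee y \in D$, i.e. $v \in \dom \P{a}{x \vee y}$, and since $v$ was arbitrary in the intersection the reverse inclusion follows.

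So suppose $c, d \in D$ with $e := c \wedge d \lessdot c, d$. From $v \in \dom \P{a}{c}$ and lem.~\ref{lem:phi-concat} along $a \leq e \leq c$, one gets $v \in \dom \P{a}{e}$ and $w := \P{a}{e}(v) \in \dom \P{e}{c}$; symmetrically $w \in \dom \P{e}{d}$. Since $e \lessdot c$ and $e \lessdot d$, the only saturated chains involved have length one, so $\P{e}{c} = \W{e}{c}$ and $\P{e}{d} = \W{e}{d}$ (def.~\ref{def:phi-from-omega}), whence $w \in \dom \W{e}{c} \cap \dom \W{e}{d}$. Now apply axiom \ref{PS11-mm} to the pair $c, d$ (legitimate since $c \wedge d = e \lessdot c, d$): there is a saturated chain $e = z_0 \lessdot \cdots \lessdot z_n = c \vee d$ with $\dom \W{e}{c} \cap \dom \W{e}{d} \subseteq \dom \W{z_\bullet}{*}$. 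By chain-independence (lem.~\ref{lem:chain-indep-meet} or lem.~\ref{lem:chain-indep-join}, available because the minimal axioms include \ref{PS10-}) together with def.~\ref{def:phi-from-omega}, $\W{z_\bullet}{*} = \P{e}{c \vee d}$, so $w \in \dom \P{e}{c \vee d}$. Finally lem.~\ref{lem:phi-concat} along $a \leq e \leq c \vee d$ gives $\P{a}{c \vee d} = \P{e}{c \vee d} \circ \P{a}{e}$, and since $v \in \dom \P{a}{e}$ with $\P{a}{e}(v) = w \in \dom \P{e}{c \vee d}$, we obtain $v \in \dom \P{a}{c \vee d}$, i.e. $c \vee d \in D$. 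This verifies the hypothesis and completes the reverse inclusion.

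The only non-routine step is the passage from the covering case to arbitrary $x,y$, for which lem.~\ref{lem:join-from-cover} is tailored exactly; everything else is bookkeeping with lem.~\ref{lem:phi-concat} and the identification of one-step $\P{\bullet}{\bullet}$'s with $\W{\bullet}{\bullet}$'s. One should note that none of the maps in play is empty where it matters: $v$ itself witnesses $\dom \P{a}{e} \neq \zeroslash$, and in any case the composition identities of lem.~\ref{lem:phi-concat} carry no non-emptiness hypothesis. The dual statement, with $\im$ in place of $\dom$, follows by the same argument run upside down, using \ref{PS11-jj} and lem.~\ref{lem:meet-from-cover}.
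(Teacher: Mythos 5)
Your proof is correct and follows essentially the same route as the paper: both arguments work in the finite interval $[x\wedge y, x\vee y]$, define a down-set $D$ recording where the relevant domain condition holds, verify the covering-case hypothesis via axiom \ref{PS11-mm} together with the composition identity of lem.~\ref{lem:phi-concat}, and then invoke lem.~\ref{lem:join-from-cover} to reach $x\vee y$, finishing with the easy reverse inclusion. The only cosmetic difference is that you track a single element $v$ while the paper carries the whole intersection $\dom \P{x \wedge y}{x} \cap \dom \P{x \wedge y}{y}$ through the argument; these are interchangeable formulations of the same proof.
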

\begin{proof} \disconnect
Define $L = [x \wedge y, x \vee y]$, which is a finite lattice
by \ref{PS1-}.
Define $D = \{ z \in L \mvert
\dom \P{x \wedge y}{x} \cap \dom \P{x \wedge y}{y} \subset
\dom \P{x \wedge y}{z} \}$.
By lem.~\ref{lem:concat}, if $x \wedge y \leq a \leq b \leq x \vee y$, then
$\dom \P{x \wedge y}{b} = \dom \P{a}{b} \circ \P{x \wedge y}{a}
\subset \dom \P{x \wedge y}{a}$,
so $D$ is a down-set of L.

Assume $a, b \in D$ and $a \wedge b \lessdot a, b$.
We prove $a \vee b \in D$:
\begin{alignat*}{2}
\interject{By \ref{PS11-mm} and def.~\ref{def:phi-from-omega},}
\dom \P{a \wedge b}{a} \cap \dom \P{a \wedge b}{b} & \subset
\dom \P{a \wedge b}{a \vee b} \\
\interject{composing all the maps with $\P{x \wedge y}{a \wedge b}$,}
\dom \P{a \wedge b}{a} \circ \P{x \wedge y}{a \wedge b} \cap
\dom \P{a \wedge b}{b} \circ \P{x \wedge y}{a \wedge b} & \subset
\dom \P{a \wedge b}{a \vee b} \circ \P{x \wedge y}{a \wedge b} \\
\interject{by lem.~\ref{lem:concat},}
\dom \P{x \wedge y}{a} \cap
\dom \P{a \wedge b}{b} & \subset
\dom \P{x \wedge y}{a \vee b} \\
\interject{since
$\dom \P{x \wedge y}{x} \cap \dom \P{x \wedge y}{y} \subset
\dom \P{x \wedge y}{a}$ and
$\dom \P{x \wedge y}{x} \cap \dom \P{x \wedge y}{y} \subset
\dom \P{x \wedge y}{b}$ by hypothesis,}
\dom \P{x \wedge y}{x} \cap \dom \P{x \wedge y}{y} & \subset
\dom \P{x \wedge y}{a} \cap \dom \P{a \wedge b}{b} \\
& \subset \dom \P{x \wedge y}{a \vee b}
\end{alignat*}
So $a \vee b \in D$.

Applying lem.~\ref{lem:join-from-cover} to $D$ shows that $D$ is
closed under joins.  Since $x, y \in D$ trivially, $x \vee y \in D$
and so
\begin{equation*}
\dom \P{x \wedge y}{x} \cap \dom \P{x \wedge y}{y} \subset
\dom \P{x \wedge y}{x \vee y}. \tag{a}
\end{equation*}

Following \citeHerr*{(7)}{(7)}\cite{Wor2025a}*{Lem.~4.6},
$\dom \P{x \wedge y}{x \vee y} =
\dom \P{x}{x \vee y} \circ \P{x \wedge y}{x} \subset
\dom \P{x \wedge y}{x}$
and similarly
$\dom \P{x \wedge y}{x \vee y} \subset
\dom \P{x \wedge y}{y}$,
so
$\dom \P{x \wedge y}{x \vee y} \subset
\dom \P{x \wedge y}{x} \cap \dom \P{x \wedge y}{y}$.
Combining that with (a) shows that
$\dom \P{x \wedge y}{x} \cap \dom \P{x \wedge y}{y} =
\dom \P{x \wedge y}{x \vee y}$.
\end{proof}

Dually, we prove:

\begin{lemma} \label{lem:PS11-jj-stronger}
Given a system that satisfies the minimal axioms,
for any $x, y \in S$,
$\im \P{x}{x \vee y} \cap \im \P{y}{x \vee y} =
\im \P{x \wedge y}{x \vee y}$.
\end{lemma}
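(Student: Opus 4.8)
The plan is to dualize the proof of lem.~\ref{lem:PS11-mm-stronger} essentially line for line: domains (filters) become images (ideals), the down-set $D$ becomes an up-set $U$, joins become meets, precomposition becomes postcomposition, and the appeal to lem.~\ref{lem:join-from-cover} is replaced by one to lem.~\ref{lem:meet-from-cover}. Concretely, I would set $L = [x \wedge y, x \vee y]$, which is a finite lattice by \ref{PS1-}, and put
\[
U = \{\, z \in L \mvert \im \P{x}{x \vee y} \cap \im \P{y}{x \vee y} \subset \im \P{z}{x \vee y} \,\}.
\]
The first step is to check $U$ is an up-set of $L$: for $x \wedge y \leq a \leq b \leq x \vee y$, lem.~\ref{lem:phi-concat} gives $\P{a}{x \vee y} = \P{b}{x \vee y} \circ \P{a}{b}$, so $\im \P{a}{x \vee y} = \P{b}{x \vee y}(\im \P{a}{b}) \subset \im \P{b}{x \vee y}$; hence $z \in U$ and $z \leq z'$ in $L$ force $z' \in U$.

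The second step is to verify the cover-hypothesis of lem.~\ref{lem:meet-from-cover}. Suppose $a, b \in U$ and $a, b \lessdot a \vee b$; I must show $a \wedge b \in U$. By \ref{PS11-jj} together with def.~\ref{def:phi-from-omega}, $\im \P{a}{a \vee b} \cap \im \P{b}{a \vee b} \subset \im \P{a \wedge b}{a \vee b}$. Applying the partial bijection $\P{a \vee b}{x \vee y}$ to all three sets, and using both that a partial bijection $f$ satisfies $f(A) \cap f(B) = f(A \cap B)$ and that $\P{a \vee b}{x \vee y}(\im \P{c}{a \vee b}) = \im \P{c}{x \vee y}$ for $c \in \{a, b, a \wedge b\}$ (again lem.~\ref{lem:phi-concat}, since $a \vee b \leq x \vee y$), yields $\im \P{a}{x \vee y} \cap \im \P{b}{x \vee y} \subset \im \P{a \wedge b}{x \vee y}$. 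Since $a, b \in U$, the set $\im \P{x}{x \vee y} \cap \im \P{y}{x \vee y}$ is contained in $\im \P{a}{x \vee y} \cap \im \P{b}{x \vee y}$, hence in $\im \P{a \wedge b}{x \vee y}$, so $a \wedge b \in U$.

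The third step concludes. Since $x, y \in U$ trivially, lem.~\ref{lem:meet-from-cover} gives $x \wedge y \in U$, i.e.\ $\im \P{x}{x \vee y} \cap \im \P{y}{x \vee y} \subset \im \P{x \wedge y}{x \vee y}$. For the reverse inclusion, lem.~\ref{lem:phi-concat} gives $\im \P{x \wedge y}{x \vee y} = \P{x}{x \vee y}(\im \P{x \wedge y}{x}) \subset \im \P{x}{x \vee y}$ and symmetrically $\im \P{x \wedge y}{x \vee y} \subset \im \P{y}{x \vee y}$ --- this is the dual of the identity invoked via \citeHerr*{(7)}{(7)}\cite{Wor2025a}*{Lem.~4.6} in the proof of lem.~\ref{lem:PS11-mm-stronger}. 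Combining the two inclusions gives the asserted equality.

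The only point requiring care --- the main "obstacle", such as it is --- is keeping the composition direction straight: because images are transported covariantly, one must \emph{postcompose} with $\P{a \vee b}{x \vee y}$ (rather than precompose, as one does with domains in lem.~\ref{lem:PS11-mm-stronger}), and one must invoke the intersection-preservation property of partial bijections in place of the automatic intersection-preservation of preimages that the meet-side argument relies on.
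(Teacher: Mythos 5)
Your proposal is correct and matches the paper's intent exactly: the paper proves this lemma simply by the remark ``Dually, we prove:'' following lem.~\ref{lem:PS11-mm-stronger}, and your argument is precisely that dualization carried out in full (up-set $U$ in place of down-set $D$, lem.~\ref{lem:meet-from-cover} in place of lem.~\ref{lem:join-from-cover}, postcomposition with $\P{a \vee b}{x \vee y}$ using injectivity to preserve intersections of images). No gaps.
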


\begin{lemma} \label{lem:im-dom-converse}
Given a system that satisfies the minimal axioms,
for any $x, y \in S$,
$\P{x \wedge y}{y} \circ \PI{x \wedge y}{x} =
\PI{y}{x \vee y} \circ \P{x}{x \vee y}$
as a partial bijection from $L_x$ to $L_y$.
\end{lemma}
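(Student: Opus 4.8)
Write $m = x\wedge y$ and $j = x\vee y$. By Definition~\ref{def:phi-from-omega} and Lemma~\ref{lem:phi-from-omega} the maps $\P{m}{x},\P{m}{y},\P{x}{j},\P{y}{j},\P{m}{j}$ are all partial bijections whose domains are filters and whose images are ideals, and by Lemma~\ref{lem:phi-concat} we have the two factorizations $\P{m}{j} = \P{x}{j}\circ\P{m}{x} = \P{y}{j}\circ\P{m}{y}$. The plan is to prove the asserted equality of partial bijections $L_x\to L_y$ by post-composing both sides with $\P{y}{j}$ and then cancelling $\P{y}{j}$.

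For the first step, using $\P{m}{x}\circ\PI{m}{x} = \id_{\im\P{m}{x}}$ and $\P{y}{j}\circ\PI{y}{j} = \id_{\im\P{y}{j}}$ together with the two factorizations of $\P{m}{j}$, one computes
\begin{align*}
\P{y}{j}\circ\bigl(\P{m}{y}\circ\PI{m}{x}\bigr) &= \P{m}{j}\circ\PI{m}{x} = \P{x}{j}\circ\bigl(\P{m}{x}\circ\PI{m}{x}\bigr) = \P{x}{j}\circ\id_{\im\P{m}{x}},\\
\P{y}{j}\circ\bigl(\PI{y}{j}\circ\P{x}{j}\bigr) &= \id_{\im\P{y}{j}}\circ\P{x}{j}.
\end{align*}
So it is enough to see that $\P{x}{j}\circ\id_{\im\P{m}{x}} = \id_{\im\P{y}{j}}\circ\P{x}{j}$, i.e., that for $z\in\dom\P{x}{j}$ the conditions $z\in\im\P{m}{x}$ and $\P{x}{j}(z)\in\im\P{y}{j}$ are equivalent. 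This is where Lemma~\ref{lem:PS11-jj-stronger} is used: $\im\P{x}{j}\cap\im\P{y}{j} = \im\P{m}{j} = \P{x}{j}(\im\P{m}{x}\cap\dom\P{x}{j})$, so by injectivity of $\P{x}{j}$ on its domain the equivalence is immediate.

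For the cancellation step I must check that the images of the two composites $\P{m}{y}\circ\PI{m}{x}$ and $\PI{y}{j}\circ\P{x}{j}$ both lie in $\dom\P{y}{j}$. The second is clear, since its image is contained in $\im\PI{y}{j} = \dom\P{y}{j}$. For the first, $\im\bigl(\P{m}{y}\circ\PI{m}{x}\bigr) = \P{m}{y}\bigl(\dom\P{m}{x}\cap\dom\P{m}{y}\bigr)$, which equals $\P{m}{y}(\dom\P{m}{j})$ by Lemma~\ref{lem:PS11-mm-stronger}; and since $\P{m}{j} = \P{y}{j}\circ\P{m}{y}$ gives $\dom\P{m}{j} = \PI{m}{y}(\dom\P{y}{j})$ by the composition identity $\dom(g\circ f) = f^{-1}(\dom g)$, this set equals $\dom\P{y}{j}\cap\im\P{m}{y}\subseteq\dom\P{y}{j}$. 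With both images inside $\dom\P{y}{j}$, the injectivity of $\P{y}{j}$ turns $\P{y}{j}\circ(\text{L.H.S.}) = \P{y}{j}\circ(\text{R.H.S.})$ into the claimed identity, domains included.

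Apart from the bookkeeping of domains and images of composed partial bijections, the only delicate point is this last cancellation, and it is precisely there that the ``meet'' strengthening (Lemma~\ref{lem:PS11-mm-stronger}) is invoked, while the ``join'' strengthening (Lemma~\ref{lem:PS11-jj-stronger}) is what makes the reduced identity hold --- so the lemma may be read as the assertion that these two strengthenings fit together. One could alternatively prove it by checking directly that both sides have domain $\im\P{m}{x}\cap\dom\P{x}{j}$ and agree pointwise there, with the same ingredients; the cancellation route seems shorter.
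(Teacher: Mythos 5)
Your proof is correct, and it rests on exactly the same two inputs as the paper's --- Lemmas~\ref{lem:PS11-mm-stronger} and~\ref{lem:PS11-jj-stronger} together with the factorizations $\P{x\wedge y}{x\vee y}=\P{x}{x\vee y}\circ\P{x\wedge y}{x}=\P{y}{x\vee y}\circ\P{x\wedge y}{y}$ --- but it is organized differently. The paper argues element-wise in both directions: for $z$ in the domain of the left-hand side it uses the meet strengthening to place $\PI{x\wedge y}{x}\,z$ in $\dom\P{x\wedge y}{x\vee y}$ and then reads off agreement of values, and for $z$ in the domain of the right-hand side it uses the join strengthening symmetrically, so that each strengthening handles one inclusion of domains. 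You instead post-compose both sides with $\P{y}{x\vee y}$, reduce the claim to the single identity $\P{x}{x\vee y}\circ\id_{\im\P{x\wedge y}{x}}=\id_{\im\P{y}{x\vee y}}\circ\P{x}{x\vee y}$ (which is exactly where the join strengthening enters), and then cancel the injective $\P{y}{x\vee y}$, with the meet strengthening relegated to the domain bookkeeping that legitimizes the cancellation. The one place a reader could stumble is that cancelling a partial bijection on the left requires knowing that the images of \emph{both} composites lie in $\dom\P{y}{x\vee y}$ --- otherwise post-composition can shrink the domains unequally --- and you identify and verify this correctly. Your route isolates the content of the lemma into one clean identity at the cost of that cancellation argument; the paper's route avoids any cancellation but repeats essentially the same computation twice, once in each direction.
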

\begin{proof} \disconnect
Assume $z \in \dom \P{x \wedge y}{y} \circ \PI{x \wedge y}{x}$.
Then $\PI{x \wedge y}{x}\,z \in \dom \P{x \wedge y}{y}$,
but since necessarily $\PI{x \wedge y}{x}\,z \in \dom \P{x \wedge y}{x}$, then
$\PI{x \wedge y}{x}\,z \in
\dom \P{x \wedge y}{x} \cap \dom \P{x \wedge y}{y}$ and
by lem.~\ref{lem:PS11-mm-stronger} and~\ref{lem:phi-concat},
$\PI{x \wedge y}{x}\,z \in \dom \P{x \wedge y}{x \vee y} =
\dom \P{x}{x \vee y} \circ \P{x \wedge y}{x} =
\dom \P{y}{x \vee y} \circ \P{x \wedge y}{y}$.
Then,
\begin{alignat*}{2}
(\P{x}{x \vee y} \circ \P{x \wedge y}{x})(\PI{x \wedge y}{x}\,z) & =
(\P{y}{x \vee y} \circ \P{x \wedge y}{y})(\PI{x \wedge y}{x}\,z) \\
\P{x}{x \vee y}(\P{x \wedge y}{x}(\PI{x \wedge y}{x}\,z)) & =
\P{y}{x \vee y}(\P{x \wedge y}{y}(\PI{x \wedge y}{x}\,z)) \\
\P{x}{x \vee y}\,z & =
\P{y}{x \vee y}(\P{x \wedge y}{y}(\PI{x \wedge y}{x}\,z)) \\
\PI{y}{x \vee y}(\P{x}{x \vee y}\,z) & =
\P{x \wedge y}{y}(\PI{x \wedge y}{x}\,z).
\end{alignat*}

In the reverse direction,
assume $z \in \dom \PI{y}{x \vee y} \circ \P{x}{x \vee y}$.
Then $\P{x}{x \vee y}\,z \in \dom \PI{y}{x \vee y} = \im \P{y}{x \vee y}$,
but since necessarily $\P{x}{x \vee y}\,z \in \im \P{x}{x \vee y}$, then
$\P{x}{x \vee y}\,z \in \im \P{x}{x \vee y} \cap \im \P{y}{x \vee y}$ and
by lem.~\ref{lem:PS11-jj-stronger} and~\ref{lem:phi-concat},
$\P{x}{x \vee y}\,z \in \im \P{x \wedge y}{x \vee y} =
\im \P{x}{x \vee y} \circ \P{x \wedge y}{x} =
\im \P{y}{x \vee y} \circ \P{x \wedge y}{y}$.
Then,
\begin{alignat*}{2}
(\P{x}{x \vee y} \circ \P{x \wedge y}{x})^{-1}(\P{x}{x \vee y}\,z) & =
(\P{y}{x \vee y} \circ \P{x \wedge y}{y})^{-1}(\P{x}{x \vee y}\,z) \\
\PI{x \wedge y}{x}(\PI{x}{x \vee y}(\P{x}{x \vee y}\,z)) & =
\PI{x \wedge y}{y}(\PI{y}{x \vee y}(\P{x}{x \vee y}\,z)) \\
\PI{x \wedge y}{x}\,z & =
\PI{x \wedge y}{y}(\PI{y}{x \vee y}(\P{x}{x \vee y}\,z)) \\
\P{x \wedge y}{y}(\PI{x \wedge y}{x}\,z) & =
\PI{y}{x \vee y}(\P{x}{x \vee y}\,z)
\end{alignat*}

Thus, $\P{x \wedge y}{y} \circ \PI{x \wedge y}{x} = \PI{y}{x \vee y} \circ \P{x}{x \vee y}$.
\end{proof}

\paragraph{Defining \texorpdfstring{$\relgamma$}{\83\263}} \disconnect

\begin{definition} \label{def:relxi}
Given a system that satisfies the minimal axioms,
for any $x \leq y$ in $S$,
we define the relationship $x \relxi y$ iff $\P{x}{y}$ is not empty.
\end{definition}

\begin{lemma} \label{lem:xi-join-1}
Given a system that satisfies the minimal axioms,
if $x \relxi x^\prime$ and $x \leq y$, then
$x \vee y \relxi x^\prime \vee y$.
\end{lemma}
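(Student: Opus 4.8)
The plan is to induct on the interval $[x,y]$, which is finite by \ref{PS1-}, stripping off one cover at a time from the bottom. Recall that by def.~\ref{def:relxi}, ``$x \relxi x'$'' means $x \le x'$ and $\P{x}{x'}$ is nonempty, and that the hypothesis $x \le y$ forces $x \vee y = y$ and $y \le x' \vee y$; so the claim reduces to showing that $\P{y}{x' \vee y}$ is nonempty. The base case $x = y$ is trivial, since then $x' \vee y = x'$ (as $x \le x'$) and $\P{y}{x' \vee y} = \P{x}{x'}$, which is nonempty by hypothesis.

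For the inductive step, suppose $x < y$ and choose a cover $r_1$ with $x \lessdot r_1 \le y$. The crux is the sub-claim that $\P{r_1}{x' \vee r_1}$ is nonempty. Granting it, I apply the induction hypothesis to the triple $(r_1, x' \vee r_1, y)$ --- legitimate because $r_1 \le x' \vee r_1$, $r_1 \le y$, and $[r_1,y] \subsetneq [x,y]$ --- to conclude that $\P{y}{(x'\vee r_1)\vee y}$ is nonempty; since $r_1 \le y$ we have $(x'\vee r_1)\vee y = x'\vee y$, which finishes the step.

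To prove the sub-claim I would first show that $\P{x}{x'\vee r_1}$ is nonempty. If $r_1 \le x'$ this is immediate, since then $x'\vee r_1 = x'$. Otherwise $r_1 \wedge x' = x$, because $x \le r_1 \wedge x' < r_1$ and $x \lessdot r_1$; then lem.~\ref{lem:PS11-mm-stronger} gives $\dom\P{x}{x'\vee r_1} = \dom\P{x}{r_1} \cap \dom\P{x}{x'}$, where $\P{x}{r_1}$ is the one-step composite $\W{x}{r_1}$ by def.~\ref{def:phi-from-omega}, whose domain is a filter of the finite lattice $L_x$ by \ref{PS3-} and hence contains $1_{L_x}$, while $\dom\P{x}{x'}$ is a filter of $L_x$ by lem.~\ref{lem:phi-from-omega} and so also contains $1_{L_x}$; thus $1_{L_x}$ lies in the intersection and $\P{x}{x'\vee r_1}$ is nonempty. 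Finally, lem.~\ref{lem:phi-concat} factors $\P{x}{x'\vee r_1} = \P{r_1}{x'\vee r_1}\circ\P{x}{r_1}$, and a composite of partial bijections can be nonempty only if each factor is, so $\P{r_1}{x'\vee r_1}$ is nonempty, proving the sub-claim.

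I expect the sub-claim to be the only real obstacle. The naive alternative --- apply the lemma cover by cover along a saturated chain from $x$ to $x'$ and compose --- does not work, because a composite of nonempty partial bijections can be empty; routing everything through the block top $1_{L_x}$ is precisely what controls this, using that \ref{PS3-} makes every $\W{\bullet}{\bullet}$ nonempty (its domain being a filter) and that lem.~\ref{lem:phi-from-omega} guarantees the domain of a nonempty $\P{\bullet}{\bullet}$ is a filter. One must also be careful with the case split $r_1 \le x'$ versus $r_1 \not\le x'$ in the sub-claim, since lem.~\ref{lem:PS11-mm-stronger} only yields useful information in the latter case, where $r_1\wedge x' = x$.
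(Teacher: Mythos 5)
Your proof is correct and follows essentially the same route as the paper's: the paper inducts along a fixed saturated chain $x = z_0 \lessdot \cdots \lessdot z_n = y$, maintaining $z_i \relxi x' \vee z_i$, and at each cover uses exactly your key step --- $\hatone_{L_{z_i}}$ lies in both domain filters, lem.~\ref{lem:PS11-mm-stronger} pushes it into $\dom\P{z_i}{x'\vee z_{i+1}}$, and the factorization through $\P{z_i}{z_{i+1}}$ then forces $\P{z_{i+1}}{x'\vee z_{i+1}}$ to be nonempty. Your recursive formulation on the interval $[x,y]$ and your explicit case split at $r_1 \le x'$ are only cosmetic differences.
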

\begin{proof} \disconnect
Choose a saturated chain
$x = z_0 \lessdot z_1 \lessdot \cdots \lessdot z_n = y$.
We will show by induction on $0 \leq i \leq n$ that
$z_i \relxi x^\prime \vee z_i$.

The base case is $i = 0$ and $z_0 = x$, which is true by hypothesis.

For $i \geq 0$,
if $z_i \relxi x^\prime \vee z_i$,
then $\dom \P{z_i}{x^\prime \vee z_i} \neq \zeroslash$,
so by lem.~\ref{lem:phi-from-omega},
$\hatone_{L_{z_i}} \in \dom \P{z_i}{x^\prime \vee z_i}$.
By \ref{PS3-}, $\dom \P{z_i}{z_{i+1}} \neq \zeroslash$,
so by lem.~\ref{lem:phi-from-omega},
$\hatone_{L_{z_i}} \in \dom \P{z_i}{z_{i+1}}$.
By lem.~\ref{lem:PS11-mm-stronger},
$\hatone_{L_{z_i}} \in \dom \P{z_i}{(x^\prime \vee z_i) \vee z_{i+1}}
= \dom \P{z_i}{x^\prime \vee z_{i+1}}
= \dom \P{z_{i+1}}{x^\prime \vee z_{i+1}} \circ \P{z_i}{z_{i+1}}$,
so $\P{z_{i+1}}{x^\prime \vee z_{i+1}}$ is not empty and
$z_{i+1} \relxi x^\prime \vee z_{i+1}$.

The case $i = n$ and $z_n = y$ gives the conclusion.
\end{proof}

\begin{lemma} \label{lem:xi-join-2}
Given a system that satisfies the minimal axioms,
if $x \relxi x^\prime$ and $y \relxi y^\prime$,
then $x \vee y \relxi x^\prime \vee y^\prime$.
\end{lemma}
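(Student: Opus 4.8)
The plan is to check Definition~\ref{def:relxi} head on. The inequality $x \vee y \le x' \vee y'$ is immediate from $x \le x'$ and $y \le y'$, so all that is needed is to show that the connection $\P{x \vee y}{x' \vee y'}$ is non-empty.

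First I would extract two instances of Lemma~\ref{lem:xi-join-1}. Applied to $x \relxi x'$ with the element $x \vee y \ge x$, it gives $x \vee y \relxi x' \vee y$ (using $x' \vee (x \vee y) = x' \vee y$); applied to $y \relxi y'$ with the element $x \vee y \ge y$, it gives $x \vee y \relxi x \vee y'$ (using $y' \vee (x \vee y) = x \vee y'$). So both $\P{x \vee y}{x' \vee y}$ and $\P{x \vee y}{x \vee y'}$ are non-empty.

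The naive next move would be to combine these two relations ``transitively'' along $x \vee y \le x' \vee y \le x' \vee y'$, and this is exactly where the obstacle lies: $\relxi$ is not transitive along chains in general. The device I would use instead is the skeleton element $m := (x' \vee y) \wedge (x \vee y')$, which satisfies $x \vee y \le m \le x' \vee y$ and $x \vee y \le m \le x \vee y'$, while $(x' \vee y) \vee (x \vee y') = x' \vee y'$. By Lemma~\ref{lem:phi-concat}, $\P{x \vee y}{m}$ is a factor of each of $\P{x \vee y}{x' \vee y}$ and $\P{x \vee y}{x \vee y'}$, so it too is non-empty; and since $L_{x \vee y}$ is finite by \ref{PS2-} and a non-empty domain is a filter of the source block by Lemma~\ref{lem:phi-from-omega}, the domains $\dom \P{x \vee y}{x' \vee y}$, $\dom \P{x \vee y}{x \vee y'}$ and $\dom \P{x \vee y}{m}$ all contain $\hatone_{L_{x \vee y}}$. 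Put $p := \P{x \vee y}{m}(\hatone_{L_{x \vee y}})$. Factoring $\P{x \vee y}{x' \vee y} = \P{m}{x' \vee y} \circ \P{x \vee y}{m}$ and evaluating at $\hatone_{L_{x \vee y}}$ forces $p \in \dom \P{m}{x' \vee y}$; factoring $\P{x \vee y}{x \vee y'} = \P{m}{x \vee y'} \circ \P{x \vee y}{m}$ the same way forces $p \in \dom \P{m}{x \vee y'}$.

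It then remains to invoke Lemma~\ref{lem:PS11-mm-stronger} for the pair $(x' \vee y,\, x \vee y')$, whose meet is $m$ and whose join is $x' \vee y'$: this gives $\dom \P{m}{x' \vee y} \cap \dom \P{m}{x \vee y'} = \dom \P{m}{x' \vee y'}$, whence $p \in \dom \P{m}{x' \vee y'}$. Factoring $\P{x \vee y}{x' \vee y'} = \P{m}{x' \vee y'} \circ \P{x \vee y}{m}$ (Lemma~\ref{lem:phi-concat} again, since $x \vee y \le m \le x' \vee y'$), this composite is defined at $\hatone_{L_{x \vee y}}$, so $\P{x \vee y}{x' \vee y'}$ is non-empty, as required. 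The crux, as flagged above, is that plain transitivity is false; the meet $m$ together with the observation that a non-empty domain is a filter of a finite lattice and so holds the top element is precisely what lets Lemma~\ref{lem:PS11-mm-stronger} perform the combining. (A dual argument using images, $\im$, and Lemma~\ref{lem:PS11-jj-stronger} would work equally well.)
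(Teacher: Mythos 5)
Your proof is correct and follows essentially the same route as the paper's: two applications of Lemma~\ref{lem:xi-join-1} to get $x \vee y \relxi x' \vee y$ and $x \vee y \relxi x \vee y'$, followed by the strengthened domain identity to combine them at the top element $\hatone_{L_{x \vee y}}$. Your explicit introduction of $m = (x' \vee y) \wedge (x \vee y')$ and the factoring through $\P{x \vee y}{m}$ is in fact a welcome refinement, since the paper's bare citation of \ref{PS11-mm} at that step tacitly requires exactly the reduction via Lemma~\ref{lem:phi-concat} and Lemma~\ref{lem:PS11-mm-stronger} that you spell out.
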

\begin{proof} \disconnect
Applying lem.~\ref{lem:xi-join-1} to $x \relxi x^\prime$ and $x \vee y$,
we obtain $x \vee y \relxi x^\prime \vee y$.
Applying lem.~\ref{lem:xi-join-1} to $y \relxi y^\prime$ and $x \vee y$,
$x \vee y \relxi x \vee y^\prime$.
These imply that
$\hatone_{L_{x \vee y}} \in \dom \P{x \vee y}{x^\prime \vee y}$ and
$\hatone_{L_{x \vee y}} \in \dom \P{x \vee y}{x \vee y^\prime}$,
so by \ref{PS11-mm},
$\hatone_{L_{x \vee y}} \in \dom \P{x \vee y}{x^\prime \vee y^\prime}$
and thus $x \vee y \relxi x^\prime \vee y^\prime$.
\end{proof}

Dually, we prove:

\begin{lemma} \label{lem:xi-meet-2}
Given a system that satisfies the minimal axioms,
if $x \relxi x^\prime$ and $y \relxi y^\prime$,
then $x \wedge y \relxi x^\prime \wedge y^\prime$.
\end{lemma}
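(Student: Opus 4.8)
The plan is to read off this statement as the order-dual of lem.~\ref{lem:xi-join-2}. The whole development up to this point --- the minimal axioms, def.~\ref{def:phi-from-omega}, and def.~\ref{def:relxi} --- is invariant under the involution that simultaneously reverses the order of $S$ and of every block $L_x$: this swaps \ref{PS10-m} with \ref{PS10-j} and \ref{PS11-mm} with \ref{PS11-jj}, fixes \ref{PS1-} and \ref{PS2-}, and fixes \ref{PS3-} since ``filter'' and ``ideal'' exchange while ``lattice isomorphism'' is self-dual; it carries each $\W{x}{y}$ to its inverse (an ideal of $L_y$ becoming a filter of $L_y^{\mathrm{op}}$, as needed), hence carries $\P{x}{y}$ to its inverse $\PI{x}{y}$, so nonemptiness of $\P{x}{y}$ --- and therefore the relation $\relxi$ --- is preserved. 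Granting that, every consequence of the minimal axioms has a valid dual, the dual of lem.~\ref{lem:xi-join-2} is exactly the present statement, and we are done. So the first thing I would do is check that each lemma feeding into lem.~\ref{lem:xi-join-2} (in particular lem.~\ref{lem:phi-from-omega}, lem.~\ref{lem:phi-concat}, lem.~\ref{lem:PS11-mm-stronger}) has already been established, or is equally provable, in dual form, so that lem.~\ref{lem:PS11-jj-stronger} and the duals of the others are at hand.

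For a self-contained argument I would instead transcribe the proof of lem.~\ref{lem:xi-join-2} dually. Its one external ingredient, lem.~\ref{lem:xi-join-1}, dualizes to: \emph{if $x \relxi x'$ with $x \le x'$, and $y \le x'$, then $x \wedge y \relxi x' \wedge y$}; this is proved dually to lem.~\ref{lem:xi-join-1}, by induction along a saturated chain, using that a nonempty $\im \P{\bullet}{\bullet}$ is an ideal (hence contains $\hatzero$) and lem.~\ref{lem:PS11-jj-stronger}. With the orientation $x \le x'$, $y \le y'$ relevant for def.~\ref{def:ord-tol}, I would then (i) apply the dual lemma to the quotient $x \relxi x'$ with the adjoined element $x' \wedge y'$, obtaining $x \wedge y' \relxi x' \wedge y'$; (ii) apply it to $y \relxi y'$ with the same adjoined element $x' \wedge y'$, obtaining $x' \wedge y \relxi x' \wedge y'$; and (iii) combine the two resulting connections into the common target $L_{x' \wedge y'}$. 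For (iii): set $d = (x \wedge y') \vee (x' \wedge y)$, so $x \wedge y \le d \le x' \wedge y'$; by lem.~\ref{lem:phi-concat} factor $\P{x \wedge y'}{x' \wedge y'} = \P{d}{x' \wedge y'} \circ \P{x \wedge y'}{d}$ and likewise for $x' \wedge y$; let $s$ be the bottom element of the filter $\dom \P{d}{x' \wedge y'}$, that is $\PI{d}{x' \wedge y'}(\hatzero_{L_{x' \wedge y'}})$; nonemptiness of each composite forces $s \in \im \P{x \wedge y'}{d}$ and $s \in \im \P{x' \wedge y}{d}$; by lem.~\ref{lem:PS11-jj-stronger} applied to the pair $x \wedge y', x' \wedge y$ (whose meet is $x \wedge y$ and whose join is $d$), $s \in \im \P{x \wedge y}{d}$; and since also $s \in \dom \P{d}{x' \wedge y'}$, the composite $\P{d}{x' \wedge y'} \circ \P{x \wedge y}{d} = \P{x \wedge y}{x' \wedge y'}$ is nonempty, i.e.\ $x \wedge y \relxi x' \wedge y'$.

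The parts where I expect the real work to be are getting the dual of lem.~\ref{lem:xi-join-1} oriented correctly --- its side condition is that the adjoined element lies \emph{below the larger} member of the quotient, which is easy to flip by accident --- and checking that the single choice $x' \wedge y'$ of adjoined element makes all the needed identities close, namely $x \wedge (x' \wedge y') = x \wedge y'$, $y \wedge (x' \wedge y') = x' \wedge y$, and $(x \wedge y') \wedge (x' \wedge y) = x \wedge y$, each of which uses exactly the hypotheses $x \le x'$ and $y \le y'$. The factoring-through-$d$ bookkeeping in step (iii), and the equivalence between nonemptiness of a composite of partial bijections and nonemptiness of its factors, are routine and are the precise duals of the manipulations already carried out inside lem.~\ref{lem:xi-join-2}.
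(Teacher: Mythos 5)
Your proposal is correct and matches the paper's approach exactly: the paper's entire proof of this lemma is the single word ``Dually,'' relying on the same self-duality of the minimal axioms (swapping \ref{PS10-m} with \ref{PS10-j}, \ref{PS11-mm} with \ref{PS11-jj}, and each $\W{x}{y}$ with its inverse) that you identify. Your explicit dual transcription of lem.~\ref{lem:xi-join-2}, including the factoring through $d=(x\wedge y')\vee(x'\wedge y)$ before applying lem.~\ref{lem:PS11-jj-stronger}, is also sound and in fact spells out a step the paper glosses over in the original lemma.
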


Thus, $\relxi$ is an ordered tolerance which can be extended to a
tolerance $\relgamma$:

\begin{definition} \label{def:gamma-from-xi}
Given a system that satisfies the minimal axioms,
for any $x, y \in S$, we define the relation
$x \relgamma y$ iff $x \wedge y \relxi x \vee y$.
\end{definition}

\begin{lemma} \label{lem:gamma-from-xi}
Given a system that satisfies the minimal axioms,
$\relxi$ is an ordered tolerance on $S$ and $\relgamma$ is a tolerance
on $S$.
\end{lemma}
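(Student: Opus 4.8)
The plan is to check the two defining conditions of an ordered tolerance for $\relxi$ by hand, and then to obtain the tolerance property of $\relgamma$ essentially for free from lem.~\ref{lem:ord-tol}.

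First I would dispatch reflexivity: for any $x \in S$, lem.~\ref{lem:phi-from-omega} gives $\P{x}{x} = \id_{L_x}$, and by \ref{PS2-} the block $L_x$ is \fmc, hence nonempty, so $\P{x}{x}$ is not empty; by def.~\ref{def:relxi} this is exactly $x \relxi x$. Next I would verify compatibility with meet and join in the sense of def.~\ref{def:ord-tol}: given $x \relxi x^\prime$ and $y \relxi y^\prime$ (so in particular $x \leq x^\prime$ and $y \leq y^\prime$, whence $x \vee y \leq x^\prime \vee y^\prime$ and $x \wedge y \leq x^\prime \wedge y^\prime$), lem.~\ref{lem:xi-join-2} yields $x \vee y \relxi x^\prime \vee y^\prime$ and lem.~\ref{lem:xi-meet-2} yields $x \wedge y \relxi x^\prime \wedge y^\prime$. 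Together with reflexivity this shows $\relxi$ is an ordered tolerance on $S$.

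For $\relgamma$, I would then appeal to lem.~\ref{lem:ord-tol}: an ordered tolerance on $S$ extends to a unique tolerance, and the construction in the proof of that lemma takes an ordered tolerance $\relrho$ to the relation given by $x \mapsto y$ iff $x \wedge y \relrho x \vee y$. Instantiating $\relrho = \relxi$, this constructed tolerance is, by def.~\ref{def:gamma-from-xi}, literally $\relgamma$; hence $\relgamma$ is a tolerance on $S$.

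I do not expect a genuine obstacle here, since all of the real work has already been done in lemmas \ref{lem:xi-join-2}, \ref{lem:xi-meet-2}, \ref{lem:ord-tol}, and \ref{lem:phi-from-omega}; the only points needing a moment's care are (i) that reflexivity of $\relxi$ relies on the blocks being nonempty, which \ref{PS2-} supplies, and (ii) that the relation of def.~\ref{def:gamma-from-xi} coincides exactly with the explicit extension formula used inside the proof of lem.~\ref{lem:ord-tol}, so that the uniqueness/extension statement there applies verbatim.
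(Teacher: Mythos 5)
Your proposal is correct and follows essentially the same route as the paper: reflexivity of $\relxi$ from the definition of $\P{x}{x}$, compatibility from lem.~\ref{lem:xi-join-2} and lem.~\ref{lem:xi-meet-2}, and then lem.~\ref{lem:ord-tol} to extend to the tolerance $\relgamma$. Your two added points of care --- that reflexivity needs $L_x$ nonempty, and that def.~\ref{def:gamma-from-xi} is literally the extension formula from the proof of lem.~\ref{lem:ord-tol} --- are both sound and merely make explicit what the paper leaves implicit.
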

\begin{proof} \disconnect
By def.~\ref{def:relxi} and~\ref{def:phi-from-omega},
$\relxi$ is reflexive.
That fact, together with lem.~\ref{lem:xi-meet-2},
lem.~\ref{lem:xi-join-2}, and def.~\ref{def:ord-tol} show that
$\relxi$ is an ordered tolerance.

Lem.~\ref{lem:ord-tol} then shows that $\relgamma$ is a tolerance.
\end{proof}

\paragraph{Equivalence} \disconnect

\begin{theorem} \label{th:axioms-min}
All alternatives of
the minimal axioms (def.~\forwardref{def:polytone-min}) are equivalent to the
baseline axioms (def.~\ref{def:polytone}).
\end{theorem}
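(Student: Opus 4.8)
The plan is to prove the equivalence in three steps: (i) from any system satisfying the minimal axioms, the relation $\relgamma$ and the maps $(\P{x}{y})$ constructed in def.~\ref{def:gamma-from-xi} and def.~\ref{def:phi-from-omega} make $(S,(L_x))$ into a system satisfying the baseline axioms; (ii) from any baseline polytone system, the restriction $\W{x}{y}:=\P{x}{y}$ to covers $x\lessdot y$ satisfies the minimal axioms, in \emph{both} alternatives; and (iii) these two passages are mutually inverse, so the axiom sets describe the same objects. Nothing below needs to distinguish the alternatives \ref{PS10-m} and \ref{PS10-j}: every lemma from def.~\ref{def:phi-from-omega} onward was proved for a minimal system under \emph{either} alternative, and step (ii) produces a system satisfying both.

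For (i): \ref{PS1}, \ref{PS2} are \ref{PS1-}, \ref{PS2-}; \ref{PS3} and \ref{PS4} follow from lem.~\ref{lem:phi-from-omega}; \ref{PS6} is lem.~\ref{lem:phi-concat}; \ref{PS7} and \ref{PS8} are the inclusions contained in lem.~\ref{lem:PS11-jj-stronger} and lem.~\ref{lem:PS11-mm-stronger} (which actually give equalities and need no $\relgamma$ hypothesis); and that $\relgamma$ is a tolerance is lem.~\ref{lem:gamma-from-xi}. Only \ref{PS9} and \ref{PS5} need a remark. For \ref{PS9}: if $x\lessdot y$ then $\P{x}{y}=\W{x}{y}$ via the trivial chain in def.~\ref{def:phi-from-omega}, and $\dom\W{x}{y}$ is a filter of $L_x$ by \ref{PS3-}, hence nonempty, so $\P{x}{y}\neq\zeroslash$ and $x\relgamma y$. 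For \ref{PS5}: if $x\leq_\gamma y\leq_\gamma z$ and $\im\P{x}{y}\cap\dom\P{y}{z}\neq\zeroslash$, then by lem.~\ref{lem:phi-concat} $\P{x}{z}=\P{y}{z}\circ\P{x}{y}$, whose domain is $\PI{x}{y}\bigl(\im\P{x}{y}\cap\dom\P{y}{z}\bigr)\neq\zeroslash$, so $\P{x}{z}\neq\zeroslash$ and $x\relgamma z$.

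For (ii): by \ref{PS9} every cover has $x\relgamma y$, so $\W{x}{y}=\P{x}{y}$ is a genuine connection and \ref{PS3-} is \ref{PS3}. The one elementary fact to isolate is that for a saturated chain $a=s_0\lessdot\cdots\lessdot s_n=b$ inside an interval $[a,b]$ with $a\relgamma b$, repeated use of \ref{PS6} gives $\W{s_\bullet}{*}=\P{a}{b}$, all intermediate connections being defined by convexity, lem.~\ref{lem:tol}(\ref{lem:tol:i1}). Now if $x\wedge y\lessdot x,y$ then $x\wedge y\relgamma x$ and $x\wedge y\relgamma y$ by \ref{PS9}, hence $x\wedge y\relgamma x\vee y$ by lem.~\ref{lem:tol}(\ref{lem:tol:i3}) and $x\relgamma y$ by lem.~\ref{lem:tol}(\ref{lem:tol:i2}); so $[x\wedge y,x\vee y]$ is a $\relgamma$-connected interval through which every chain collapses. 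Consequently \ref{PS10-m} holds with any choice of chains, since both sides equal $\P{x\wedge y}{x\vee y}$ by \ref{PS6}; \ref{PS11-mm} follows from \ref{PS8}, taking $\W{z_\bullet}{*}=\P{x\wedge y}{x\vee y}$; and dually \ref{PS10-j} follows from \ref{PS6} and \ref{PS11-jj} from \ref{PS7}.

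For (iii), the round trip from a minimal system returns each $\W{x}{y}$ unchanged, since on a cover $\P{x}{y}=\W{x}{y}$ via the trivial chain. In the other direction, the rebuilt connection on a comparable pair $x\leq y$ is a chain product of original cover connections, equal to the original $\P{x}{y}$ by \ref{PS6} when $x\relgamma y$ and empty otherwise (matching def.~\ref{def:phi-extended}), and the rebuilt relation ``$\P{x\wedge y}{x\vee y}\neq\zeroslash$'' agrees with the original $\relgamma$ by lem.~\ref{lem:tol}(\ref{lem:tol:i2}), once one checks that a chain product over a saturated chain is nonempty iff its endpoints are $\relgamma$-related --- a short induction on chain length using \ref{PS5}, \ref{PS6}, \ref{PS9}. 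I expect the main obstacle to be exactly this bookkeeping of definedness and of the empty-connection convention; all the substantive work --- chain-independence of $\W{x_\bullet}{*}$ (lem.~\ref{lem:chain-indep-meet}, lem.~\ref{lem:chain-indep-join}) and the strengthenings of \ref{PS11-mm} and \ref{PS11-jj} (lem.~\ref{lem:PS11-mm-stronger}, lem.~\ref{lem:PS11-jj-stronger}) --- is already in place.
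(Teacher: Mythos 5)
Your proposal is correct and takes essentially the same route as the paper: both directions use the identical constructions (restricting $\P{\bullet}{\bullet}$ to covers and collapsing saturated chains via \ref{PS6} and \ref{PS9} one way; def.~\ref{def:phi-from-omega}, def.~\ref{def:gamma-from-xi}, and lemmas \ref{lem:phi-from-omega}--\ref{lem:gamma-from-xi} the other way), with the same lemma-by-lemma verification of each axiom. The only difference is your step (iii), an explicit check that the two passages are mutually inverse; the paper omits this and proves only the two implications, so your version is a harmless bit of extra rigor rather than a different argument.
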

\begin{proof} \disconnect

Assume a system satisfies the baseline axioms.
Then minimal axioms \ref{PS1-}, \ref{PS2-}, and \ref{PS3-} are
immediately satisfied.

Define for $x \lessdot y$ in $S$, $\W{x}{y} = \P{x}{y}$.

Concerning \ref{PS10-m}:  Assume $x \wedge y \lessdot x, y$ in $S$.
Then by \ref{PS9} and lem.~\ref{lem:tol}, $x \wedge y \relgamma x, y$, so
$x \wedge y, x, y, \relgamma x \vee y$.
Then choose a saturated chain
$x = x_0 \lessdot x_1 \cdots \lessdot x_n = x \vee y$.
By \ref{PS6},
$\W{x_\bullet}{*} \circ \W{x \wedge y}{x} =
\P{x}{x \vee y} \circ \P{x \wedge y}{x} =
\P{x \wedge y}{x \vee y}$.
Similarly, chose a saturated chain
$y = y_0 \lessdot y_1 \cdots \lessdot y_m = x \vee y$.
By \ref{PS6},
$\W{y_\bullet}{*} \circ \W{y \wedge y}{y} = \P{x \wedge y}{x \vee y}$.
These show \ref{PS10-m} is satisfied.

Dually, \ref{PS10-j} holds in the system.

Concerning \ref{PS11-mm}:  Assume $x \wedge y \lessdot x, y$ in $S$ and
an arbitrary saturated chain
$x \wedge y =z_0 \lessdot z_1 \cdots \lessdot z_n = x \vee y$.
Again by \ref{PS9} and lem.~\ref{lem:tol},
$x$, $y$, $x \wedge y$, and $x \vee y$ are $\relgamma$ to each other.
By definition
$\W{x \wedge y}{x} = \P{x \wedge y}{x}$ and
$\W{x \wedge y}{y} = \P{x \wedge y}{y}$ and
by \ref{PS6},
$\W{z_\bullet}{*} = \P{x \wedge y}{x \vee y}$.
Then by \ref{PS8},
$\dom \P{x \wedge y}{x} \cap \dom \P{x \wedge y}{y} \subset
\dom \P{x \wedge y}{x \vee y}$.
Hence \ref{PS11-mm} is satisfied by the system.

Dually, \ref{PS11-jj} holds in the system.

Together, these show that the system satisfies the minimal axioms.


Conversely, assume a system satisfies the minimal axioms.  Then
baseline axioms \ref{PS1} and \ref{PS2} are immediately satisfied.

Since the system satisfies \ref{PS10-}, we can use
lem.~\ref{lem:chain-indep-meet}, lem.~\ref{lem:chain-indep-join}, and
def.~\ref{def:phi-from-omega} to define $\P{x}{y}$ for any
$x \leq y$ in $S$.  By construction, $\P{\bullet}{\bullet}$
satisfies \ref{PS4} and \ref{PS6}.

Define the relationship $\relgamma$ on $S$ by def.~\ref{def:relxi}
and~\ref{def:gamma-from-xi}.  By lem.~\ref{lem:gamma-from-xi},
$\relgamma$ is a tolerance on $S$.

Lem.~\ref{lem:phi-from-omega} shows that \ref{PS3} is satisfied.

Regarding \ref{PS5}:  If $x \leq_\gamma y \leq_\gamma z$ in $S$ and
$\im \P{x}{y} \cap \dom \P{y}{z} \neq \zeroslash$,
then by lem.~\ref{lem:phi-concat},
$\dom \P{x}{z} = \dom \P{y}{z} \circ \P{x}{y} \neq \zeroslash$,
so $x \relgamma y$.

Lem.~\ref{lem:phi-concat} shows that \ref{PS6} is satisfied.

Lem.~\ref{lem:PS11-jj-stronger} shows that \ref{PS7} is satisfied.
Similarly, lem.~\ref{lem:PS11-mm-stronger} shows that \ref{PS8} is satisfied.

Def.~\ref{def:relxi}, def.~\ref{def:gamma-from-xi}, and \ref{PS3-}
show that \ref{PS9} is satisfied.

Thus, the system satisfies the baseline axioms.
\end{proof}

\section{Maximal axioms}

Now we strengthen the set of statements we can adduce about a polytone
system.  Rather than showing our efforts step-by-step,
we will give the end result of our efforts as a definition,
then work through the proof that it is equivalent to the baseline
axioms.

The names of the minimal axioms are distinguished by including ``\suppl''.

\begin{definition} \label{def:polytone-max}
The \emph{maximal axioms}:
We define a \emph{polytone system} to be comprised of:
\begin{enumerate}
\item[\textbullet] a \emph{skeleton} lattice $S$,
\item[\textbullet] an \emph{overlap tolerance} $\relgamma$, which is
a tolerance on $S$,
\item[\textbullet] a family of \emph{blocks} $(L_x)_{x \in S}$, which
are lattices, and
\item[\textbullet] a family of \emph{connections}
$(\P{x}{y})_{x, y \in S, x \leq y}$, which are mappings,
\end{enumerate}
that satisfy these axioms:
\begin{enumerate}
\itemlabel{(PS1\suppl)}{PS1+} The skeleton $S$ is \lffc.
\itemlabel{(PS2\suppl)}{PS2+} The blocks $L_x$ are \fmc.
\itemlabel{(PS3\suppl)}{PS3+}
Each $\P{x}{y}$ is a partial bijection from $L_x$ to $L_y$.
If $x \not\relgamma y$, then $\P{x}{y}$ is empty.
If $x \relgamma y$, then
$\dom \P{x}{y}$ is a filter of $L_x$,
$\im \P{x}{y}$ is an ideal of $L_y$,
and $\P{x}{y}$ is a lattice isomorphism.
\itemlabel{(PS4\suppl)}{PS4+} For any $x \in S$, $\P{x}{x} = \id_{L_x}$.
\itemlabel{(PS5\suppl)}{PS5+} If $x \leq y \leq z$ in $S$ and
$\im \P{x}{y} \cap \dom \P{y}{z} \neq \zeroslash$, then $x \relgamma z$.
\itemlabel{(PS6\suppl)}{PS6+} For every $x \leq z \leq y$ in $S$,
then $\P{x}{y} = \P{z}{y} \circ \P{x}{z}$.
\itemlabel{(PS7\suppl)}{PS7+} For every $z \leq x \wedge y$ in $S$,
$\dom \P{z}{x} \cap \dom \P{z}{y} = \dom \P{z}{x \vee y}$.
\itemlabel{(PS8\suppl)}{PS8+} For every $x \vee y \leq z$ in $S$,
$\im \P{x}{z} \cap \im \P{y}{z} = \im \P{x \wedge y}{z}$.
\itemlabel{(PS9\suppl)}{PS9+} If $x \lessdot y$ in $S$, then
$x \relgamma y$ (and thus $\P{x}{y}$ is not empty).
\end{enumerate}
\end{definition}

Each maximal axiom (PS+$n$) is a stronger form of the corresponding
baseline axiom (PS$n$).
The additional axiom for a monotone system, (MS), is unchanged.

We copy this lemma from \cite{Wor2025a}*{Lem.~4.7}:

\begin{lemma} \label{lem:MCd-stronger}
If $z \leq_\gamma x, y$ in $S$, then
$\dom \P{z}{x} \cap \dom \P{z}{y} = \dom \P{z}{x \vee y}$.
If $x, y \leq_\gamma z$ in $S$, then
$\im \P{x}{z} \cap \im \P{y}{z} = \im \P{x \wedge y}{z}$.
\end{lemma}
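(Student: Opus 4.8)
The plan is to prove the first equality; the second then follows by the order-reversing dual (interchange $\wedge$ with $\vee$, domains with images, filters with ideals, and \ref{PS7} with \ref{PS8}). All of the argument takes place inside the interval $[z,x\vee y]$ and uses only the baseline axioms, chiefly \ref{PS6}, \ref{PS7}, \ref{PS8}, together with lem.~\ref{lem:tol} and the elementary facts about domains of composites of partial bijections.

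The first step I would carry out is to pin down which pairs among $z$, $x\wedge y$, $x$, $y$, $x\vee y$ are $\relgamma$-related, since every subsequent invocation of \ref{PS6} and of \ref{PS8} carries a $\relgamma$-hypothesis. From $z\relgamma x$, $z\relgamma y$, and $z\le x\wedge y$, lem.~\ref{lem:tol}(\ref{lem:tol:i3}) gives $z\relgamma x\vee y$; then lem.~\ref{lem:tol}(\ref{lem:tol:i1}) applied along $z\le x\wedge y\le x\vee y$ gives $z\relgamma x\wedge y$ and $x\wedge y\relgamma x\vee y$; and lem.~\ref{lem:tol}(\ref{lem:tol:i2}) converts the last one into $x\relgamma y$. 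So every pair I need is available.

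Next, for the inclusion $\dom\P{z}{x\vee y}\subseteq\dom\P{z}{x}\cap\dom\P{z}{y}$ I would apply \ref{PS6} to the chain $z\le x\le x\vee y$ to get $\P{z}{x\vee y}=\P{x}{x\vee y}\circ\P{z}{x}$, whence $\dom\P{z}{x\vee y}=\P{z}{x}^{-1}(\dom\P{x}{x\vee y})\subseteq\dom\P{z}{x}$, and symmetrically $\subseteq\dom\P{z}{y}$. For the reverse inclusion I would push everything down to $x\wedge y$: applying \ref{PS6} to the chains $z\le x\wedge y\le x$, $z\le x\wedge y\le y$, and $z\le x\wedge y\le x\vee y$ exhibits $\dom\P{z}{x}$, $\dom\P{z}{y}$, $\dom\P{z}{x\vee y}$ as the $\P{z}{x\wedge y}$-preimages of $\dom\P{x\wedge y}{x}$, $\dom\P{x\wedge y}{y}$, $\dom\P{x\wedge y}{x\vee y}$ respectively. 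Since preimage commutes with intersection,
\[
\dom\P{z}{x}\cap\dom\P{z}{y}
=\P{z}{x\wedge y}^{-1}\bigl(\dom\P{x\wedge y}{x}\cap\dom\P{x\wedge y}{y}\bigr),
\]
and \ref{PS8} applied to $x,y$ (legitimate since $x\relgamma y$) bounds the bracketed intersection inside $\dom\P{x\wedge y}{x\vee y}$; taking the preimage once more gives $\subseteq\dom\P{z}{x\vee y}$. Combining the two inclusions yields the equality.

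The argument is essentially bookkeeping, and the only place that demands real care is the first step: each structural axiom I invoke needs a specific $\relgamma$-relation, and all of those must be derived from the two given hypotheses $z\leq_\gamma x$ and $z\leq_\gamma y$ via lem.~\ref{lem:tol}. Once those relations are in hand, the remaining manipulations are routine identities about domains and images of composites of partial bijections, and the image statement is obtained by the dual bookkeeping using \ref{PS7}.
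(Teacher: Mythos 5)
Your argument is correct. Note, however, that the paper itself gives no proof of this lemma: it is imported verbatim from \cite{Wor2025a}*{Lem.~4.7}, so there is no in-paper argument to compare against, and what you have written is a self-contained derivation from the baseline axioms of this paper. The bookkeeping checks out. From $z \relgamma x$, $z \relgamma y$, and $z \leq x \wedge y$, lem.~\ref{lem:tol}(\ref{lem:tol:i3}) gives $z \relgamma x \vee y$; one then needs to \emph{iterate} lem.~\ref{lem:tol}(\ref{lem:tol:i1}) (first down from $x \vee y$, then within the resulting comparable pairs) to see that \emph{every} comparable pair in $[z, x \vee y]$ is $\relgamma$-related --- in particular $x \relgamma x \vee y$ and $x \wedge y \relgamma x$, which you need so that each $\P{\bullet}{\bullet}$ you compose is a genuine connection of the baseline system rather than an appeal to the empty-map convention of def.~\ref{def:phi-extended}; your summary ``every pair I need is available'' is true but this small extra step deserves to be explicit. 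After that, the two inclusions go through exactly as you describe: $\dom (g \circ f) = f^{-1}(\dom g) \subseteq \dom f$ applied to \ref{PS6} on $z \leq x \leq x \vee y$ gives $\dom \P{z}{x \vee y} \subseteq \dom \P{z}{x} \cap \dom \P{z}{y}$, and pulling the \ref{PS8} inclusion $\dom \P{x \wedge y}{x} \cap \dom \P{x \wedge y}{y} \subset \dom \P{x \wedge y}{x \vee y}$ (legitimate since $x \relgamma y$) back through the injective partial map $\P{z}{x \wedge y}$ --- under which preimage commutes with intersection --- gives the reverse inclusion. The image statement is the exact dual via \ref{PS7}.
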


\begin{lemma} \label{lem:phi-extended}
In a polytone system that satisfies the baseline axioms
(def.~\ref{def:polytone}), with the extension of
$\P{\bullet}{\bullet}$ (def.~\ref{def:phi-extended}):
\begin{enumerate}
\item If $x \leq y \leq z$ in $S$ and
$\im \P{x}{y} \cap \dom \P{y}{z} \neq \zeroslash$, then $x \relgamma z$.
\item For every $x \leq z \leq y$ in $S$,
then $\P{x}{y} = \P{z}{y} \circ \P{x}{z}$.
\item For every $z \leq x \wedge y$ in $S$,
$\dom \P{z}{x} \cap \dom \P{z}{y} = \dom \P{z}{x \vee y}$.
\item For every $x \vee y \leq z$ in $S$,
$\im \P{x}{z} \cap \im \P{y}{z} = \im \P{x \wedge y}{z}$.
\end{enumerate}
\end{lemma}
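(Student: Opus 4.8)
The plan is to establish (1) directly from the original axioms and then bootstrap it to obtain (2), (3), and (4); in each part the only point that goes beyond the baseline development is the bookkeeping for the possibility that some pair involved is not $\relgamma$-related, so that by def.~\ref{def:phi-extended} the corresponding $\P{\bullet}{\bullet}$ is the empty partial bijection. The organizing observation is that $\P{a}{b}$ is empty exactly when $a \not\relgamma b$; hence any hypothesis asserting that a $\dom$, an $\im$, or a composite is nonempty automatically supplies all the $\relgamma$-relations needed to invoke an original axiom, so the empty-map extension is never actually in force in a situation governed by an axiom.

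For (1): since $\im \P{x}{y} \cap \dom \P{y}{z} \neq \zeroslash$, both $\P{x}{y}$ and $\P{y}{z}$ are nonempty, whence $x \relgamma y$ and $y \relgamma z$; thus $x \leq_\gamma y \leq_\gamma z$, and \ref{PS5} applies verbatim to give $x \relgamma z$. For (2) I would split on whether $x \relgamma y$. If $x \relgamma y$, then since $x \leq z \leq y$, convexity of the tolerance (lem.~\ref{lem:tol}(\ref{lem:tol:i1})) gives $x \relgamma z$ and $z \relgamma y$, so all three maps are the original nonempty connections and \ref{PS6} yields the identity unchanged. If $x \not\relgamma y$, then $\P{x}{y}$ is empty, and it suffices to show that $\P{z}{y} \circ \P{x}{z}$ is empty too; were it not, there would exist an element of $\im \P{x}{z} \cap \dom \P{z}{y}$, and part (1), applied to $x \leq z \leq y$, would force $x \relgamma y$, a contradiction. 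So both sides are empty and the identity holds.

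For (3): the inclusion $\dom \P{z}{x \vee y} \subset \dom \P{z}{x} \cap \dom \P{z}{y}$ is immediate from (2), since $\dom \P{z}{x \vee y} = \dom(\P{x}{x \vee y} \circ \P{z}{x}) \subset \dom \P{z}{x}$ and symmetrically for $y$. For the reverse inclusion, if $\dom \P{z}{x} \cap \dom \P{z}{y} = \zeroslash$ there is nothing to prove; otherwise both $\P{z}{x}$ and $\P{z}{y}$ are nonempty, so $z \leq_\gamma x$ and $z \leq_\gamma y$, and lem.~\ref{lem:MCd-stronger} delivers the equality directly (and lem.~\ref{lem:tol}(\ref{lem:tol:i3}) confirms $z \relgamma x \vee y$, so the extended and unextended $\P{z}{x \vee y}$ coincide in this case). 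Part (4) is proved dually, using lem.~\ref{lem:MCd-stronger} and lem.~\ref{lem:tol}(\ref{lem:tol:i4}).

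I do not expect a substantive obstacle here; the content is entirely in organizing the empty/nonempty case analysis. The one thing requiring care is ordering: (1) must be in hand before the $x \not\relgamma y$ case of (2), and (2) must be proved before (3) and (4); and every appeal to an original axiom or to lem.~\ref{lem:MCd-stronger} must be made only after the nonemptiness hypotheses have been used to confirm that all the relevant pairs are $\relgamma$-related, so that those results are invoked on the original (nonempty) connections to which they apply.
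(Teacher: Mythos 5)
Your proof is correct and takes essentially the same approach as the paper: derive (1) directly from \ref{PS5}, then handle (2)--(4) by a case analysis on which $\relgamma$-relations hold, invoking \ref{PS6} and lem.~\ref{lem:MCd-stronger} in the nonempty cases and showing both sides are empty otherwise. Your reorganization (two cases instead of four in (2), and deriving the forward inclusion of (3) from (2) rather than from a $\relgamma$ case split) is only cosmetic.
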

\begin{proof} \disconnect
Regarding (1):
Since $\im \P{x}{y} \cap \dom \P{y}{z} \neq \zeroslash$,
$\im \P{x}{y} \neq \zeroslash$, so $x \leq_\gamma y$.
Similarly, $\dom \P{y}{z} \neq \zeroslash$, so $y \leq_\gamma z$.
Thus, $x \leq_\gamma y \leq_\gamma z$.
Applying \ref{PS5} shows that $x \relgamma z$.

Regarding (2): We prove this by cases:
\begin{enumerate}
\item $x \not\relgamma z$: $\P{x}{z}$ is empty, and
by lem.~\ref{lem:tol}(\ref{lem:tol:i1}),
$x \not\relgamma y$, so $\P{x}{y}$ is empty.
Thus $\P{x}{y} = \P{z}{y} \circ \P{x}{z}$.
\item $z \not\relgamma y$:
Similarly, $\P{z}{y}$ is empty, $x \not\relgamma y$, and
$\P{x}{y}$ is empty, so
$\P{x}{y} = \P{z}{y} \circ \P{x}{z}$.
\item $x \relgamma z$, $z \relgamma y$, and $x \relgamma y$:
By \ref{PS6}, $\P{x}{y} = \P{z}{y} \circ \P{x}{z}$.
\item $x \relgamma z$, $z \relgamma y$, but $x \not\relgamma y$:
By def.~\ref{def:phi-extended}, $\P{x}{y}$ is empty.
By \ref{PS5}, $\im \P{x}{y} \cap \dom \P{y}{z} = \zeroslash$,
which implies $\P{y}{z} \circ \P{x}{y}$ is empty.
\end{enumerate}

Regarding (3): We prove this by cases:
\begin{enumerate}
\item $z \relgamma x$ and $z \relgamma y$: The conclusion follows by
lem.~\ref{lem:MCd-stronger}.
\item $z \not\relgamma x$:
By lem.~\ref{lem:tol}(\ref{lem:tol:i1}),
$z \not\relgamma x \vee y$, so
$\dom \P{z}{x} = \dom \P{z}{x \vee y} = \zeroslash$,
which proves the conclusion.
\item $z \not\relgamma x$: We prove the conclusion similarly to case (2).
\end{enumerate}

Regarding (4): We prove this dually to (3).
\end{proof}

\begin{theorem} \label{th:axioms-max}
The maximal axioms (def.~\forwardref{def:polytone-max}) are equivalent to the
baseline axioms (def.~\ref{def:polytone}).
\end{theorem}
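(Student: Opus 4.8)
The plan is to set up the obvious correspondence between the two kinds of system and then check axioms in each direction. A system satisfying the baseline axioms already carries an overlap tolerance $\relgamma$, a family of blocks, and connections $\P{\bullet}{\bullet}$ indexed by $\leq_\gamma$-pairs; applying def.~\ref{def:phi-extended} extends those connections to all comparable pairs (empty whenever the pair is not $\relgamma$-related). Conversely, a system satisfying the maximal axioms restricts, by forgetting $\P{x}{y}$ when $x \not\relgamma y$, to a family indexed by $\leq_\gamma$-pairs. I would show that these two operations carry systems satisfying one axiom set to systems satisfying the other, and are mutually inverse.

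For baseline $\Rightarrow$ maximal, almost everything is already available. Axioms \ref{PS1+}, \ref{PS2+}, \ref{PS4+} are literally \ref{PS1}, \ref{PS2}, \ref{PS4}. For \ref{PS3+}: when $x \not\relgamma y$ the connection $\P{x}{y}$ is empty by def.~\ref{def:phi-extended}, and when $x \relgamma y$ the filter/ideal/isomorphism statement is \ref{PS3}. For \ref{PS9+}: \ref{PS9} gives $x \relgamma y$, and then $\P{x}{y}$ is a lattice isomorphism whose domain is a filter of the nonempty lattice $L_x$, hence nonempty, so $\P{x}{y}$ is nonempty. The remaining maximal axioms \ref{PS5+}, \ref{PS6+}, \ref{PS7+}, \ref{PS8+} are exactly parts (1)--(4) of lem.~\ref{lem:phi-extended}, so there is nothing further to do in this direction.

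For maximal $\Rightarrow$ baseline, after restriction each baseline axiom is a special case or weakening of a maximal one. Axioms \ref{PS1}, \ref{PS2}, \ref{PS3}, \ref{PS4}, \ref{PS9} are immediate from their ``$\suppl$'' counterparts. Axioms \ref{PS5} and \ref{PS6} follow because \ref{PS5+} and \ref{PS6+} merely drop the $\relgamma$-hypotheses present in the baseline versions. The only bookkeeping worth naming is the index juggling for \ref{PS7} and \ref{PS8}: baseline \ref{PS7}, an image containment at $x \vee y$, is \ref{PS8+} specialized to $z = x \vee y$; baseline \ref{PS8}, a domain containment at $x \wedge y$, is \ref{PS7+} specialized to $z = x \wedge y$ --- and in both cases the maximal axiom even yields equality in place of containment. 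Finally (MS) is word-for-word the same in the two formulations, so the equivalence passes to monotone systems without extra work.

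The substantive content, such as it is, has already been front-loaded into lem.~\ref{lem:phi-extended} and lem.~\ref{lem:MCd-stronger}: checking that the extended $\P{\bullet}{\bullet}$ still composes correctly and that the overlap relation behaves under the extension, via the case analysis on whether the relevant pairs are $\relgamma$-related. Granting those lemmas, the present theorem is essentially a matching of axiom to axiom, and I expect no real obstacle beyond stating the correspondence carefully enough that both implications are visibly covered.
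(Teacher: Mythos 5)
Your proposal is correct and follows essentially the same route as the paper: extend $\P{\bullet}{\bullet}$ via def.~\ref{def:phi-extended}, read off \ref{PS1+}, \ref{PS2+}, \ref{PS3+}, \ref{PS4+}, \ref{PS9+} directly from the corresponding baseline axioms, delegate \ref{PS5+}--\ref{PS8+} to lem.~\ref{lem:phi-extended}, and observe that the converse direction is an axiom-by-axiom specialization. Your explicit note that baseline \ref{PS7} comes from \ref{PS8+} at $z = x \vee y$ and baseline \ref{PS8} from \ref{PS7+} at $z = x \wedge y$ is a correct (and slightly more careful) reading of what the paper compresses into ``the corresponding maximal axiom.''
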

\begin{proof} \disconnect
The maximal axioms imply the baseline axioms, since each of the
baseline axioms is an immediate consequence of the corresponding
maximal axiom.

Assume a system satisfies the baseline axioms.  Clearly,
$\P{x}{x}$ can be extended to all $x \leq y$ by applying
def.~\ref{def:phi-extended}.
Then axioms \ref{PS1+}, \ref{PS2+}, and \ref{PS4+} are satisfied
because they are identical to
axioms \ref{PS1}, \ref{PS2}, and \ref{PS4}
Axioms \ref{PS3+} and \ref{PS9+} are satisfied because of axioms
\ref{PS3}, \ref{PS9}, and the fact that we've constructed
$\P{x}{y}$ to be empty iff $x \not\relgamma y$.
Lem.~\ref{lem:phi-extended} shows
axioms \ref{PS5+}, \ref{PS6+}, \ref{PS7+}, and \ref{PS8+} are
satisfied.
\end{proof}

\section*{References}

\begin{biblist}[\normalsize]*{labels={alphabetic}}


\DefineSimpleKey{bib}{identifier}{}
\DefineSimpleKey{bib}{location}{}
\DefineSimpleKey{bib}{primaryclass}{}
\gdef\Zbl#1{\relax\ifhmode\unskip\spacefactor3000 \space\fi
  \href{https://zbmath.org/#1}{Zbl~#1}}
\gdef\GS#1{\relax\ifhmode\unskip\spacefactor3000 \space\fi
  \href{https://scholar.google.com/scholar?cluster=#1}{GS~#1}}

\BibSpec{arXiv}{%
    +{}{\PrintAuthors}                  {author}
    +{,}{ \textit}                      {title}
    +{,} { \PrintTranslatorsC}          {translator}
    +{}{ \parenthesize}                 {date}
    +{,}{ arXiv }                       {identifier}
    +{,}{ primary class }               {primaryclass}
    +{,} { \PrintDOI}                   {doi}
    +{,} { available at \eprint}        {eprint}
    +{.} { }                            {note}
}

\BibSpec{article}{%
    +{}  {\PrintAuthors}                {author}
    +{,} { \textit}                     {title}
    +{.} { }                            {part}
    +{:} { \textit}                     {subtitle}
    +{,} { \PrintTranslatorsC}          {translator}
    +{,} { \PrintContributions}         {contribution}
    +{.} { \PrintPartials}              {partial}
    +{,} { }                            {journal}
    +{}  { \textbf}                     {volume}
    +{}  { \PrintDatePV}                {date}
    +{,} { \issuetext}                  {number}
    +{,} { \eprintpages}                {pages}
    +{,} { }                            {status}
    +{,} { \PrintDOI}                   {doi}
    +{,} { available at \eprint}        {eprint}
    +{}  { \parenthesize}               {language}
    +{}  { \PrintTranslation}           {translation}
    +{;} { \PrintReprint}               {reprint}
    +{.} { }                            {note}
    +{.} {}                             {transition}
    +{}  {\SentenceSpace \PrintReviews} {review}
}

\BibSpec{partial}{%
    +{}  {}                             {part}
    +{:} { \textit}                     {subtitle}
    +{,} { \PrintContributions}         {contribution}
    +{,} { }                            {journal}
    +{}  { \textbf}                     {volume}
    +{}  { \PrintDatePV}                {date}
    +{,} { \issuetext}                  {number}
    +{,} { \eprintpages}                {pages}
    +{,} { \PrintDOI}                   {doi}
    +{,} { available at \eprint}        {eprint}
    +{.} { }                            {note}
}

\BibSpec{presentation}{%
    +{}{\PrintAuthors}                  {author}
    +{,}{ \textit}                      {title}
    +{,}{ }                             {date}
    +{,}{ }                             {location}
    +{,}{ }                             {series}
    +{,} { \PrintDOI}                   {doi}
    +{,} { available at \eprint}        {eprint}
    +{.} { }                            {note}
}

\bib*{xref-BogFreesKung1990a}{book}{
  title={The Dilworth theorems: Selected papers of Robert P.\ Dilworth},
  editor={Bogard, Kenneth P.},
  editor={Freese, Ralph S.},
  editor={Kung, Joseph P.\ S.},
  date={1990},
  publisher={Springer},
  address={New York},
  series={Contemporary Mathematicians},
  doi={10.1007/978-1-4899-3558-8},
}

\bib*{xref-Stan1999a}{book}{
  title={Enumerative Combinatorics, Volume 2},
  author={Stanley, Richard P.},
  date={1999},
  publisher={Cambridge University Press},
  address={Cambridge},
  series={Cambridge Studies in Advanced Mathematics},
  volume={62},
}

\bib*{xref-Stant1990a}{book}{
  title={Invariant Theory and Tableaux},
  editor={Stanton, Dennis},
  publisher={Springer-Verlag},
  series={IMA Volumes in Math. and Its Appls.},
  volume={19},
  address={Berlin and New York},
  date={1990},
}

\bib{Band1981a}{article}{
  label={Band1981a},
  author={Bandelt, Hans-J.},
  title={Tolerance relations on lattices},
  journal={Bull.\ Austral.\ Math.\ Soc.},
  volume={23},
  date={1981},
  pages={367--381},
  review={\Zbl {0449.06005}},
  doi={10.1017/S0004972700007255},
  eprint={https://www.cambridge.org/core/journals/bulletin-of-the-australian-mathematical-society/article/tolerance-relations-on-lattices/B7A754195C89ED13DB0FE16ACF761E80},
  note={\GS {559901885079221523}},
}

\bib{Birk1967a}{book}{
  label={Birk1967a},
  author={Birkhoff, Garrett},
  title={Lattice theory},
  edition={3},
  date={1967},
  publisher={American Mathematical Society},
  address={Providence},
  series={American Mathematical Society Colloquium Publications},
  volume={25},
  review={\Zbl {0153.02501}},
  eprint={https://archive.org/details/latticetheory0000birk},
  note={Original edition 1940. \GS {10180976689018188837}},
}

\bib{DayHerr1988a}{article}{
  label={DayHerr1988a},
  author={Day, Alan},
  author={Herrmann, Christian},
  title={Gluings of modular lattices},
  journal={Order},
  volume={5},
  date={1988},
  pages={85--101},
  review={\Zbl {0669.06007}},
  doi={10.1007/BF00143900},
  eprint={https://link.springer.com/article/10.1007/BF00143900},
  note={\GS {4405123069571633945}},
}

\bib{Herr1973a}{article}{
  label={Herr1973a},
  author={Herrmann, Christian},
  title={$S$-verklebte Summen von Verb\"anden [$S$-glued sums of lattices]},
  journal={Math.\ Z.},
  volume={130},
  date={1973},
  pages={255--274},
  review={\Zbl {0275.06007}},
  doi={10.1007/BF01246623},
  eprint={https://link.springer.com/article/10.1007/BF01246623},
  note={\GS {5554875835071000456} English translation in \cite {Herr1973a-en}},
}

\bib{Herr1973a-en}{arXiv}{
  label={Herr1973a-en},
  author={Herrmann, Christian},
  title={$S$-glued sums of lattices},
  translator={Worley, Dale R.},
  identifier={2409.10738},
  primaryclass={math.CO},
  doi={10.48550/arXiv.2409.10738},
  eprint={https://arxiv.org/abs/2409.10738},
  note={English translation of \cite {Herr1973a} \GS {11656565639169386626}},
}

\bib{Wor2025a}{arXiv}{
  label={Wor2025a},
  author={Worley, Dale R.},
  title={On the structure of modular lattices --- Unique gluing and dissection},
  date={2025},
  identifier={2502.08934},
  primaryclass={math.CO},
  doi={10.48550/arXiv.2502.08934},
  eprint={https://arxiv.org/abs/2502.08934},
  note={\GS {3646492554472739512}},
}

\end{biblist}

\vspace{3em}

\end{document}